\newcommand{\R}{{\mathbb R}}       
\newcommand{\HH}{{\mathcal H}}
\newcommand{\cV}{{\mathcal V}}
\newcommand{\UU}{{\mathcal U}}
\newcommand{\diam}{{\rm diam}}
\newcommand{\dist}{{\rm dist}}
\newcommand{\rf}[1]{{(\ref{#1})}}
\newcommand{\supp}{\operatorname{supp}}
\newcommand{\vphi}{{\varphi}}
\newcommand{\vv}{{\vspace{2mm}}}
\newcommand{\vvv}{{\vspace{3mm}}}
\newcommand{\wt}[1]{{\widetilde{#1}}}
\newcommand{\vmo}{{\operatorname{VMO}}}
\def\Xint#1{\mathchoice
{\XXint\displaystyle\textstyle{#1}}%
{\XXint\textstyle\scriptstyle{#1}}%
{\XXint\scriptstyle\scriptscriptstyle{#1}}%
{\XXint\scriptscriptstyle\scriptscriptstyle{#1}}%
\!\int}
\def\XXint#1#2#3{{\setbox0=\hbox{$#1{#2#3}{\int}$ }
\vcenter{\hbox{$#2#3$ }}\kern-.58\wd0}}
\def\avint{\;\Xint-}
\definecolor{ffffff}{rgb}{1.0,1.0,1.0}
\definecolor{qqqqff}{rgb}{0.0,0.0,1.0}
\definecolor{ffqqqq}{rgb}{1.0,0.0,0.0}
\definecolor{zzzzqq}{rgb}{0.6,0.6,0.0}
\definecolor{marronet}{rgb}{0.6,0.2,0}
\definecolor{negre}{rgb}{0,0,0}
\definecolor{vermell}{rgb}{0.8,0.05,0.05}
\definecolor{blau}{rgb}{0.3,0.2,1.}
\definecolor{blauclar}{rgb}{0.,0.,1.}
\definecolor{grisfosc}{rgb}{0.25098039215686274,0.25098039215686274,0.25098039215686274}
\definecolor{verd}{rgb}{0.1,0.6,0.1}
\definecolor{taronja}{rgb}{0.9,0.6,0.05}
\definecolor{vermellclar}{rgb}{1.,0.,0.}
\definecolor{verdet}{rgb}{0,0.8,0.1}
\definecolor{blauverd}{rgb}{0,0.4,0.2}
\definecolor{grisclar}{rgb}{0.6274509803921569,0.6274509803921569,0.6274509803921569}
\newtheorem{theorem}{Theorem}[section]
\newtheorem{lemma}[theorem]{Lemma}
\newtheorem*{theorem*}{Theorem}
\newtheorem*{theorema}{Theorem A}
\newtheorem*{theoremb}{Theorem B}
\theoremstyle{definition}
\theoremstyle{remark}
\newtheorem{rem}[theorem]{\bf Remark}
\numberwithin{equation}{section}
\newcommand{\brem}{\begin{rem}}
\newcommand{\erem}{\end{rem}}
\begin{document}

\title[Counterexample for two-phase problem for harmonic measure]{A counterexample regarding a two-phase problem for harmonic measure in VMO}

\author{Xavier Tolsa}

\address{ICREA, Barcelona\\
Dept. de Matem\`atiques, Universitat Aut\`onoma de Barcelona \\
and Centre de Recerca Matem\`atica, Barcelona, Catalonia.}
\email{xavier.tolsa@uab.cat}

 \subjclass{31B15; 31B20; 28A75} 

\keywords{harmonic measure, two-phase problem, Reifenberg flat domains.} 

\thanks{The author was supported by the European Research Council (ERC) under the European Union's Horizon 2020 research and innovation programme (grant agreement 101018680). 
 Also partially supported by MICINN (Spain) under the grant PID2020-114167GB-I00, the María de Maeztu Program for units of excellence (Spain) (CEX2020-001084-M), and 2021-SGR-00071 (Catalonia). }

\begin{abstract}
Let $\Omega^+\subset\R^{n+1}$ be  a vanishing Reifenberg flat domain such that $\Omega^+$ and $\Omega^-=\R^{n+1}\setminus\overline {\Omega^+}$
 have joint big pieces of chord-arc subdomains and such that the outer unit normal to $\partial\Omega^+$ belongs to $\vmo(\omega^+)$, 
  where $\omega^\pm$ is the harmonic measure in $\Omega^\pm$. Up to now it was an open question if these conditions imply that $\log\dfrac{d\omega^-}{d\omega^+} \in \vmo(\omega^+)$. 
In this paper we answer this question in the negative by constructing an appropriate counterexample in $\R^2$, with the
additional property that
 the outer unit normal to $\partial\Omega^+$ is constant $\omega^+$-a.e.\ in $\partial\Omega^+$.
\end{abstract}

\maketitle

\section{Introduction}

This paper deals with a two-phase problem for harmonic measure. In such problems one considers two disjoint domains $\Omega^+,\Omega^-\subset\R^{n+1}$ whose boundaries have non-empty intersection, and whose respective harmonic measures $\omega^+,\omega^-$ are usually mutually absolutely continuous in some subset of $\partial\Omega^+\cap \partial\Omega^-$. 
Then one has to relate
 the analytic properties of the
density $\frac{d\omega^-}{d\omega^+}|_{\partial\Omega^+\cap \partial\Omega^-}$ to the geometric properties of $\partial\Omega^+\cap \partial\Omega^-$. For example, in \cite{AMT-cpam} and \cite{AMTV} it has been proved that if
$\omega^+$ and $\omega^-$ are mutually absolutely continuous in a subset $E\subset\partial\Omega$, then $\omega^\pm|_E$
is concentrated in an $n$-rectifiable set. 
For a previous related result see \cite{KPT}, and for a more recent work involving elliptic measure,
 see \cite{AM-blow}.  
 For other works of more quantitative nature where one assumes $\Omega^+$, $\Omega^-$ to be complementary NTA domains and either that $\omega^-\in A_\infty(\omega^+)$ or stronger conditions, see \cite{Kenig-Toro-crelle}, \cite{Engelstein}, \cite{AMT-quantcpam}, 
\cite{Prats-Tolsa}, and \cite{Tolsa-Toro}, for instance. See also \cite{BET1} and \cite{BET2} for other recent results
dealing with the structure of the singular set of the boundary.

In connection with the precise question studied in this paper, by combining works of Prats, Tolsa, and Toro, the following is known:

\begin{theorema} 
Let $\Omega^+\subset\R^{n+1}$ be a bounded NTA domain and let $\Omega^-= \R^{n+1}\setminus \overline{\Omega^+}$ be an NTA domain as well.
Denote by $\omega^+$ and $\omega^-$ the respective harmonic measures with poles $p^+\in\Omega^+$ and $p^-\in\Omega^-$. Suppose that $\Omega^+$ is a $\delta$-Reifenberg flat domain, with $\delta>0$ small enough.
Then the following conditions are equivalent:
\begin{itemize}
\item[(a)] $\omega^+$ and $\omega^-$ are mutually absolutely continuous and $\log\dfrac{d\omega^-}{d\omega^+} \in \vmo(\omega^+).$

\item[(b)] $\Omega^+$ is vanishing Reifenberg flat,  $\Omega^+$ and  $\Omega^-$ have joint big pieces of chord-arc subdomains, and 
\begin{equation}\label{eqnb*}
\lim_{\rho\to 0}\, \sup_{\substack{x\in\partial\Omega^+\\0<r\leq \rho}}\, \avint_{B(x,r)} |N - N_{B(x,r)}|\,d\omega^+ = 0, 
\end{equation}
where $N$ is the outer unit normal to $\partial\Omega^+$ and
$N_{B(x,r)}$ is the unit normal (pointing to $\Omega^-$) to the $n$-plane $L_{B(x,r)}$ minimizing 
\begin{equation}\label{eqnb*2}
\max\Big\{\sup_{y\in \partial\Omega^+\cap B(x,r)}\dist(y,L_{B(x,r)}), \sup_{y\in L_{B(x,r)}\cap B(x,r)}\dist(y,\partial\Omega^+)\Big\}.
\end{equation}

\item[(c)] $\Omega^+$ and  $\Omega^-$ have joint big pieces of chord-arc subdomains and \rf{eqnb*} holds.\vv

\item[(d)] $\Omega^+$ is a chord-arc domain and the outer unit normal $N$ to $\partial\Omega^+$ belongs to $\vmo(\HH^n|_{\partial\Omega^+})$.

\end{itemize}
\end{theorema}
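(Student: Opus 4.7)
The plan is to establish the equivalence by a cycle of implications, using the characterizations of VMO and Reifenberg flatness developed in the cited works of Kenig--Toro, Prats--Tolsa, and Tolsa--Toro. I would proceed in the order $(d)\Rightarrow (b)\Rightarrow (a)\Rightarrow (c)\Rightarrow (d)$, since each step fits naturally with known machinery. For $(d)\Rightarrow (b)$, a chord-arc domain has $\omega^+$ comparable to $\HH^n|_{\partial\Omega^+}$ via Dahlberg's theorem, so the $\vmo(\HH^n)$ hypothesis on $N$ transfers to the harmonic measure and yields \rf{eqnb*}. On the other hand, for chord-arc domains with $N\in\vmo(\HH^n|_{\partial\Omega^+})$, the Kenig--Toro theory forces vanishing Reifenberg flatness; moreover $\Omega^-$ is then also NTA with chord-arc structure near $\partial\Omega^+$, from which the joint big pieces of chord-arc subdomains property is straightforward.

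Next, $(b)\Rightarrow(a)$: vanishing Reifenberg flatness together with joint chord-arc big pieces lets one localize the comparison of $\omega^+$ and $\omega^-$ to chord-arc subdomains, where both Poisson kernels are well understood. The oscillation condition \rf{eqnb*} controls how far the boundary deviates from its best approximating plane at each scale, and the Kenig--Toro pseudo-blow-up identity, expressing $\log\frac{d\omega^-}{d\omega^+}$ in terms of the deviation of $N$ from its local mean, then delivers the $\vmo(\omega^+)$ property.

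For $(a)\Rightarrow(c)$, I would argue by compactness of pseudo-tangent measures. At $\omega^+$-a.e.\ boundary point, VMO of $\log\frac{d\omega^-}{d\omega^+}$ makes every pseudo-tangent pair of measures $(\omega^+_\infty,\omega^-_\infty)$ have constant log-density, which by the Kenig--Toro / AMT-quantitative blow-up analysis forces the tangent to be flat. Flatness of all tangents, combined with the quantitative estimates that already come with two-phase absolute continuity, yields the joint chord-arc big pieces property as in \cite{AMT-quantcpam} and also the oscillation condition \rf{eqnb*}. Finally $(c)\Rightarrow(d)$ is obtained by iterating the big pieces construction to propagate the chord-arc structure globally (corkscrew + Harnack chains transfer in the vanishing regime), and by pushing \rf{eqnb*} from $\omega^+$-averages to $\HH^n$-averages using that on each scale the two measures are comparable with asymptotically sharp constants.

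The main obstacle, and the place where each implication is most delicate, is exactly this transfer between integration against $\omega^+$ and against $\HH^n|_{\partial\Omega^+}$: although the measures are mutually absolutely continuous in the relevant regime, the $A_\infty$ constants must be shown to collapse to $1$ in the small-scale limit in order to move \rf{eqnb*} between the $\omega^+$-formulation and the surface-measure formulation of $N\in\vmo$. The cited results of Prats--Tolsa and Tolsa--Toro supply precisely this quantitative comparison, and so serve as the technical backbone on which the cyclic argument rests.
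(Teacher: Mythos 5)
The paper does not actually prove Theorem A; it is quoted from the literature, with the text immediately afterwards attributing (a)\,$\Leftrightarrow$\,(b) to \cite{Prats-Tolsa} and (c)\,$\Rightarrow$\,(d)\,$\Rightarrow$\,(a) to \cite{Tolsa-Toro}, the trivial weakening (b)\,$\Rightarrow$\,(c) closing the cycle $(a)\Rightarrow(b)\Rightarrow(c)\Rightarrow(d)\Rightarrow(a)$. Your proposal uses a genuinely different cycle, $(d)\Rightarrow(b)\Rightarrow(a)\Rightarrow(c)\Rightarrow(d)$, and two of its four arrows, $(d)\Rightarrow(b)$ and $(a)\Rightarrow(c)$, are not established directly in those references --- there they only follow by composition, $(d)\Rightarrow(a)\Rightarrow(b)$ and $(a)\Rightarrow(b)\Rightarrow(c)$. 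So the comparison is really between two ways of slicing up the same body of work; your slicing is defensible, but it makes the hard implications look easier than they are, while the paper's slicing isolates the hard implications as the ones that each required a full paper.

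Two concrete points where your sketch glosses over the difficulty. First, in $(d)\Rightarrow(b)$: on a chord-arc domain the relevant result is that $\omega^+\in A_\infty(\HH^n|_{\partial\Omega^+})$ (David--Jerison/Semmes, not Dahlberg, whose theorem is for Lipschitz graphs), and $A_\infty$ is emphatically not pointwise comparability; moreover the passage from $N\in\vmo(\HH^n|_{\partial\Omega^+})$ (oscillation of $N$ about its $\HH^n$-mean) to \rf{eqnb*} (oscillation of $N$ about the normal $N_{B(x,r)}$ of the minimizing plane, integrated against $\omega^+$) is exactly the kind of transfer whose failure, when one replaces $N_{B(x,r)}$ by the $\omega^+$-mean of $N$, is the entire point of the paper's counterexample. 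You do flag this transfer at the end as the central difficulty, which is the right instinct, but it is in tension with presenting $(d)\Rightarrow(b)$ as the easy leg. Second, in $(a)\Rightarrow(c)$: the joint-big-pieces conclusion does not come from a compactness/pseudo-tangent argument but simply from the quantitative equivalence of \cite{AMT-quantcpam} between $\omega^-\in A_\infty(\omega^+)$ and joint big pieces of chord-arc subdomains (and (a) gives $A_\infty$ for free); the genuinely hard output of (a) is \rf{eqnb*}, which Prats--Tolsa obtain only after first proving vanishing Reifenberg flatness, i.e.\ by passing through (b). So as written, your $(a)\Rightarrow(c)$ is doing the work of $(a)\Rightarrow(b)$ under a different name.
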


The equivalence (a)\;$\Leftrightarrow$\;(b) was proven in \cite{Prats-Tolsa}, while (c)\;$\Rightarrow$\;(d)\;$\Rightarrow$\;(a) was shown more recently in \cite{Tolsa-Toro}.
For the definitions of NTA, Reifenberg flat, and chord-arc domains, and VMO, see Section~\ref{secprelim}.
Remark that, for a Reifenberg flat domain (and more generally, for a two-sided NTA domain) the property of having joint big pieces of chord-arc subdomains is equivalent to the fact that $\omega^-\in A_\infty(\omega^+)$, as shown in \cite{AMT-quantcpam}.  This property implies that both $\omega^+$ and $\omega^-$ are concentrated in an $n$-rectifiable set, by \cite{AMT-cpam}, and so the outer unit normal $N$ is defined $\omega^\pm$-a.e.
A key point in the theorem is that a priori it is not assumed that $\partial\Omega^+$ has locally finite surface measure in (a), (b), and (c). Nevertheless, (c)\;$\Rightarrow$\;(d) ensures that $\HH^n(\partial\Omega^+)<\infty$ (recall that a chord-arc domain is an $n$-Ahlfors regular NTA domain).

It is worth comparing the previous theorem with the following result of Kenig and Toro about the one-phase problem for harmonic measure:

\begin{theoremb}[\cite{Kenig-Toro-duke}, \cite{Kenig-Toro-annals}, \cite{Kenig-Toro-AENS}]
Let $\Omega\subset\R^{n+1}$ be a bounded $\delta$-Reifenberg flat chord-arc domain, with $\delta>0$ small enough.
Denote by $\omega$ the harmonic measure in $\Omega$ with pole $p\in\Omega$ and write $\sigma=\HH^n|_{\partial\Omega}$. Then the following conditions are equivalent:
\begin{itemize}
\item[(i)] $\log\dfrac{d\omega}{d\sigma} \in \vmo(\sigma).$

\item[(ii)] $\Omega$ is vanishing Reifenberg flat and the outer unit normal $N$ to $\partial\Omega$ belongs to $\vmo(\sigma)$.

\item[(iii)] The outer unit normal $N$ to $\partial\Omega$ exists $\sigma$-a.e.\ and it belongs to $\vmo(\sigma)$.

\end{itemize}
\end{theoremb}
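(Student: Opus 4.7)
The plan is to arrange the argument as two equivalences, (ii)\;$\Leftrightarrow$\;(iii) and (ii)\;$\Leftrightarrow$\;(i). The implication (ii)\;$\Rightarrow$\;(iii) is immediate, since any Reifenberg flat chord-arc domain is $n$-Ahlfors regular and $n$-rectifiable, so the measure-theoretic outer unit normal exists $\sigma$-a.e. For (iii)\;$\Rightarrow$\;(ii), I would use a flattening principle of Semmes type: fix $\ve>0$ and pick $\rho>0$ so small that $\avint_{B(x,r)} |N - N_{B(x,r)}|\,d\sigma < \ve$ for every $x \in \partial\Omega$ and $r \le \rho$. Applying the divergence theorem to the constant field $N_{B(x,r)}$ against suitable test cylinders centered at $x$ and exploiting Ahlfors regularity, one shows that $\partial\Omega \cap B(x,r)$ lies in a thin slab perpendicular to $N_{B(x,r)}$ whose width is controlled by a power of $\ve$; the NTA two-sided corkscrew condition then rules out holes on the opposite face of the slab, yielding vanishing Reifenberg flatness.

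For (ii)\;$\Rightarrow$\;(i), I would run a blow-up / compactness argument. At $\sigma$-a.e.\ $x_0 \in \partial\Omega$, the rescaled domains $r^{-1}(\Omega - x_0)$ converge in Hausdorff distance to a half-space $H_{x_0}$ as $r \to 0$ by vanishing Reifenberg flatness, with coherent orientation coming from $N \in \vmo(\sigma)$. Dahlberg's theorem for chord-arc domains gives $\omega \ll \sigma$ with $k = d\omega/d\sigma$, and uniform doubling of harmonic measure ensures weak-$*$ compactness of the rescaled harmonic measures. The limit must be harmonic measure on $H_{x_0}$, which is a constant multiple of $\HH^n|_{\partial H_{x_0}}$. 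Unwinding the convergence yields that the averages $k_{B(x_0,r)} := \omega(B(x_0,r))/\sigma(B(x_0,r))$ have oscillation vanishing as $r \to 0$, uniformly in $x_0$, and together with Lebesgue differentiation this forces $\log k \in \vmo(\sigma)$.

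For (i)\;$\Rightarrow$\;(ii), I would run the same compactness scheme by contradiction. Suppose either the Reifenberg flat constant of $\Omega$ on some ball $B(x_j,r_j)$, or the $\vmo(\sigma)$-oscillation of $N$ over that ball, stays above a fixed $\ve_0>0$ along a sequence $r_j \to 0$. Extracting a tangent chord-arc domain $\Omega_\infty$ and a limit harmonic measure $\omega_\infty$, the hypothesis $\log k \in \vmo(\sigma)$ forces $d\omega_\infty/d\HH^n|_{\partial\Omega_\infty}$ to be constant. A rigidity theorem then classifies such a limit domain as a half-space, contradicting the assumed nondegeneracy.

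The main obstacle is precisely the rigidity statement used in (i)\;$\Rightarrow$\;(ii): showing that an unbounded Reifenberg flat chord-arc domain whose harmonic measure is a constant multiple of surface measure must be a half-space. I would attack this by analyzing the Green's function with pole at infinity; the constant-density boundary behavior together with vanishing Reifenberg flatness should let one upgrade this Green's function to an affine function, and the resulting affine structure globalizes the flatness. This is the central technical nut of the Kenig-Toro program, and the compactness arguments of the previous paragraphs are designed precisely to isolate it.
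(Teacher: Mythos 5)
The paper does not prove Theorem~B; it is quoted verbatim from the Kenig--Toro works \cite{Kenig-Toro-duke}, \cite{Kenig-Toro-annals}, \cite{Kenig-Toro-AENS}, so there is no in-paper argument to compare against. Your outline is faithful in spirit to the Kenig--Toro program: (ii)\;$\Rightarrow$\;(iii) is immediate, (iii)\;$\Rightarrow$\;(ii) is a Semmes-type flattening step, and (i)\;$\Rightarrow$\;(ii) is the blow-up--plus--rigidity scheme from \cite{Kenig-Toro-annals}. You correctly identify the Liouville-type classification of unbounded Reifenberg flat chord-arc domains with constant Poisson kernel as the technical crux; it would help to name it, since it is precisely Alt--Caffarelli's rigidity theorem (a half-space is the only such domain), proved via the monotonicity formula and the overdetermined structure of the Green's function with pole at infinity, not merely by ``upgrading to an affine function.''

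The genuine gap is in your (ii)\;$\Rightarrow$\;(i) step. Weak-$*$ compactness of the rescaled harmonic measures, together with convergence of the rescaled domains to a half-space, does control the ratios $\omega(B(x_0,r))/\sigma(B(x_0,r))$, but $\log k\in\vmo(\sigma)$ is a statement about the $L^1$-oscillation of the \emph{density} $k=d\omega/d\sigma$, and weak-$*$ convergence of measures gives no control over the oscillation of the corresponding densities. Lebesgue differentiation recovers pointwise limits, not the uniform-in-$x_0$ decay of $\avint_{B(x_0,r)}|\log k - m_{B(x_0,r)}(\log k)|\,d\sigma$ that VMO requires. This is exactly the difficulty that \cite{Kenig-Toro-AENS} addresses with a quantitative perturbation analysis of the Green's function and Poisson kernel (comparing harmonic measure in $\Omega$ with harmonic measure in carefully constructed nearby smooth domains), rather than a soft compactness argument. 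As written, your proposal for this direction would prove only that the averaged ratios have small oscillation, which is strictly weaker than (i).
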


In view of the similarities between the statements (b) in Theorem A  and (ii) in Theorem B, a natural question (which is left open in the works \cite{Prats-Tolsa} and \cite{Tolsa-Toro}) is if under the assumptions in Theorem A, the statement (b) is equivalent to the following:
\begin{itemize}
\item[(b')] $\Omega^+$ is vanishing Reifenberg flat,  $\Omega^+$ and  $\Omega^-$ have joint big pieces of chord-arc subdomains, and the outer unit normal $N$ to $\partial\Omega^+$ belongs to $\vmo(\omega^+)$.
\end{itemize}
Notice that (b') is the same as (b), with $N_{B(x,r)}$ in \rf{eqnb*} replaced by $\avint_{B(x,r)} N\,d\omega^+$.
In this paper we provide a negative answer by constructing a suitable counterexample in $\R^2$.
The precise result is the following:

\begin{theorem}\label{teoex}
There exists a bounded vanishing Reifenberg flat domain $\Omega^+\subset\R^2$ such that $\Omega^+$ and $\Omega^-:=
\R^2\setminus\overline {\Omega^+}$ have joint big pieces of chord-arc subdomains, the outer unit normal $N$ is constant $\omega^+$-a.e.\ in $\partial\Omega^+$, and such that \rf{eqnb*} does not hold, and so $\log\dfrac{d\omega^-}{d\omega^+} \not\in \vmo(\omega^+).$
\end{theorem}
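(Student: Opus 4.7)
The plan is to construct $\Omega^+$ as a \emph{polar perturbation} of a base domain $\Omega^+_0$, exploiting that closed polar sets (zero logarithmic capacity) in $\R^2$ have zero harmonic measure in any open set containing them, yet contribute to the topological boundary and hence to the best-fit hyperplane $L_{B(x,r)}$ appearing in \rf{eqnb*2}.

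First, I would construct a bounded, vanishing Reifenberg flat base domain $\Omega^+_0 \subset \R^2$ satisfying JBPCAS with $\Omega^-_0$, and whose harmonic measure $\omega^+_0$ has outer normal $N = N_0$ constant $\omega^+_0$-a.e. Since no bounded planar domain has globally constant outer normal, and (under NTA) $\mathrm{supp}(\omega^+_0) = \partial\Omega^+_0$, the set $\{x \in \partial\Omega^+_0 : N(x) \neq N_0\}$ must be nonempty yet $\omega^+_0$-null, hence polar. This is the delicate step: the rectifiable constant-normal part of $\partial\Omega^+_0$ is forced to be a countable union of horizontal segments arranged so that their closure topologically encloses a bounded 2D region, the closing being done by capacity-zero seams which carry no harmonic mass.

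Given such $\Omega^+_0$, I would pick a sequence $\{x_k\} \subset \partial\Omega^+_0$ and scales $r_k \to 0$ with disjoint balls $B(x_k, r_k)$, and in each $B(x_k, r_k)$ place a finite set $P_k$ (of $O(1/\eta_k)$ points, with $\eta_k \to 0$) that is $\eta_k r_k$-dense along a segment $\ell_k$ tilted by a fixed angle $\theta > 0$ from the direction perpendicular to $N_0$. Setting $\Omega^+ := \Omega^+_0 \setminus \bigcup_k P_k$, the countable closed set $\bigcup_k P_k$ is polar in $\R^2$, so $\omega^+ = \omega^+_0$ on Borel sets and $N$ remains constant $\omega^+$-a.e. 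Vanishing Reifenberg flatness of $\Omega^+$ and JBPCAS for $\Omega^\pm$ are preserved because the $P_k$ are zero-dimensional and lie within $O(\theta r_k)$ of $\partial\Omega^+_0$, and the chord-arc subdomains witnessing JBPCAS for $\Omega^\pm_0$ are unaffected by adjoining a countable set to the boundary. On the other hand, at scale $r_k$ around $x_k$, $\partial\Omega^+ \cap B(x_k, r_k)$ contains both a flat piece of $\partial\Omega^+_0$ perpendicular to $N_0$ and the $\eta_k r_k$-dense sample of $\ell_k$; any hyperplane minimizing \rf{eqnb*2} must approximate both in Hausdorff distance, forcing a tilt so that $|N_{B(x_k, r_k)} - N_0| \gtrsim \sin(\theta/2)$ uniformly in $k$. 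Thus \rf{eqnb*} fails along this sequence of scales, and the equivalence (a) $\Leftrightarrow$ (b) of Theorem A immediately yields $\log(d\omega^-/d\omega^+) \notin \vmo(\omega^+)$.

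The principal obstacle is the construction of the base domain $\Omega^+_0$: producing a bounded planar domain that is simultaneously vanishing Reifenberg flat, satisfies JBPCAS, and has $\omega^+_0$-a.e.\ constant outer normal is not standard. Natural candidates such as thin rectangles or half-disks fail because harmonic measure splits over sides with differing normals, and there is a genuine topological obstruction to placing the entire rectifiable support of $\omega^+_0$ on a single horizontal line. I expect the paper to resolve this via a delicate self-similar iteration producing a ``fractal staircase'' whose horizontal steps carry all of $\omega^+_0$ while the vertical connectors shrink to a polar set in the limit; once this base is in place, the polar perturbation step, the verification of flatness and JBPCAS, and the failure of \rf{eqnb*} are relatively routine.
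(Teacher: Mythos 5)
Your proposal takes a genuinely different route from the paper, and it contains two gaps that I think are fatal.

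\textbf{The polar perturbation destroys the hypotheses you must preserve.} You set $\Omega^+ := \Omega_0^+ \setminus \bigcup_k P_k$ with each $P_k$ a finite set of points inside $\Omega_0^+$. The points of $\bigcup_k P_k$ then become isolated boundary points of $\Omega^+$. This kills the NTA condition: near an isolated boundary point $p$, the exterior corkscrew condition fails for $\Omega^+$ (there is no exterior ball of comparable radius inside $B(p,r)\setminus\overline{\Omega^+}$ when $r$ is smaller than $\dist(p,\partial\Omega_0^+)$). It also kills Reifenberg flatness as a \emph{domain}: for such small $r$, $\partial\Omega^+\cap B(p,r)=\{p\}$, so $D_{\partial\Omega^+}(p,r)=1$ by \rf{eqDE}, and the separation condition (b) in the definition fails completely. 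Thus the perturbed domain is not even $\delta$-Reifenberg flat for small $\delta$, let alone vanishing Reifenberg flat, and Theorem A does not apply to it. The fact that a closed countable set is polar is correct and does show $\omega^+$ agrees with $\omega_0^+$ as a measure; the problem is that you have broken the geometric hypotheses that the theorem is about.

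\textbf{The perturbation step is also unnecessary, and the base domain is the entire problem.} If you already had a bounded vanishing Reifenberg flat domain $\Omega_0^+$ satisfying JBPCAS with $\omega_0^+$-a.e.\ constant outer normal $N_0$, then \rf{eqnb*} would fail for $\Omega_0^+$ itself without any perturbation. Indeed \rf{eqnb*} would reduce to $\sup_{x,\,r\leq\rho}|N_0 - N_{B(x,r)}|\to0$, but $\partial\Omega_0^+$ is a bounded Jordan curve, so as one traverses it the approximate tangent direction must rotate a full turn; by vanishing Reifenberg flatness the best-fit normal $N_{B(x,r)}$ varies continuously with $x$ at small scales, so there are points $x$ and arbitrarily small $r$ with $|N_{B(x,r)}-N_0|\gtrsim1$. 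This is exactly how the paper finishes (Section~4): it locates horizontal segments with normal $(0,1)$ at every generation. So your plan defers the whole difficulty to the construction of $\Omega_0^+$ (which you acknowledge), and then performs a step that is both redundant and harmful.

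Your intuition for the base domain is close in spirit but off in an important quantitative respect. The paper builds a single modified Koch snowflake: at each generation segments are replaced by four-segment Koch pieces with a small angle $\alpha=|\gamma_L|/M_n$, except that any segment which becomes vertical with outer normal $(1,0)$ is frozen and subdivided trivially thereafter. A central-limit-theorem argument (Lemma~\ref{lemvertical}) shows that a uniformly positive fraction of the length lands on such frozen vertical segments, and these make up a set $F\subset\partial\Omega$ on which the normal is constantly $(1,0)$. One then proves, via Azzam's big-pieces theorem (Theorem~\ref{teo-azz}) and the change-of-pole formula, that $F$ has full harmonic measure (Lemma~\ref{lemfull}). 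The complement $\partial\Omega\setminus F$ is \emph{not} polar --- the whole boundary is $1$-Ahlfors regular, so $\partial\Omega\setminus F$ has positive length and Hausdorff dimension $1$; a polar set in $\R^2$ would have Hausdorff dimension $0$. The point is that $\partial\Omega\setminus F$ carries zero harmonic measure, which is a much weaker statement than being polar, and that is enough for the theorem. Your picture of ``vertical connectors shrinking to a polar set'' misidentifies the smallness that is actually available and would, if taken literally, again lead to a set that cannot close up into a Reifenberg flat Jordan domain.
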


Observe that in the  theorem, although $\log\dfrac{d\omega^-}{d\omega^+} \not\in \vmo(\omega^+)$,  we still have $\omega^-\in A_\infty(\omega^+)$, because $\Omega^+$ and $\Omega^-$ have joint big pieces of chord-arc subdomains.
Of course, the theorem also implies that in the statement (c) in Theorem A one cannot replace the assumption
\rf{eqnb*} by the fact that $N\in \vmo(\omega^+)$.

To prove Theorem \ref{teoex} we will construct a snowflake-type domain satisfying the required properties. Its construction is vaguely inspired by the example in
\cite[Section 8]{AMT-quantcpam}. We will show that in this domain harmonic measure is concentrated in a countable collection of vertical segments by a probabilistic argument which uses the central limit theorem (see Lemma \ref{lemvertical}) and which has its own interest.

\vvv


\section{Preliminaries}\label{secprelim}

We denote by $C$ or $c$ some constants that may depend on the dimension and perhaps other fixed parameters. Their values may change at different occurrences. On the contrary, constants with subscripts, like $C_0$, retain their values.
For $a,b\geq 0$, we write $a\lesssim b$ if there is $C>0$ such that $a\leq Cb$. We write $a\approx b$ to mean $a\lesssim b\lesssim a$. If we want to emphasize that the implicit constant depends on some parameter $\eta\in\R$, we write $a\approx_\eta b$.

\subsection{Ahlfors regularity and uniform rectifiability}

All measures in this paper are assumed to be Borel measures.
A measure $\mu$ in $\R^d$ is called {\em doubling} if there is some constant $C>0$ such that
$$\mu(B(x,2r))\leq C\,\mu(B(x,r))\quad \mbox{ for all $x\in\supp\mu$ and $r>0$.}$$
A measure $\mu$ in $\R^{d}$ is called {\em Ahlfors regular} (or $n$-Ahlfors regular) if
\begin{equation}\label{eqAD1}
C^{-1}r^n\leq \mu(B(x,r))\leq C r^n \quad \mbox{ for all $x\in\supp\mu$ and $0<r\leq \diam(\supp\mu)$.}
\end{equation}
A set $E\subset\R^d$ is Ahlfors regular (or $n$-Ahlfors regular) if $\HH^n|_E$ is Ahlfors regular.

A measure $\mu$ in $\R^d$ is called {\em uniformly $n$-rectifiable} if it is $n$-Ahlfors regular and
there exist constants $\theta, M >0$ such that for all $x \in \supp\mu$ and all $0<r\leq \diam(\supp\mu)$ 
there is a Lipschitz mapping $g$ from the ball $B_n(0,r)$ in $\R^{n}$ to $\R^d$ with $\text{Lip}(g) \leq M$ such that
$$
\mu(B(x,r)\cap g(B_{n}(0,r)))\geq \theta r^{n}.$$
A set $E\subset\R^d$ is called uniformly  $n$-rectifiable if the measure $\HH^n|_E$ is uniformly $n$-rectifiable.
Recall that $E$ is called $n$-rectifiable if there are Lipschitz maps $g_i:\R^n\to\R^d$
such that 
$$\HH^n\Big(E \setminus \textstyle\bigcup_i g_i(\R^n)\Big)=0.$$
It is easy to check that if $E$ is uniformly  $n$-rectifiable, then it is $n$-rectifiable.
\vv

\subsection{NTA and Reifenberg flat domains}
\label{secprelim-nta}

 Given $\Omega\subset \mathbb{R}^{n+1}$ and $C\geq 2$, 
we say that $\Omega$ satisfies the {\it $C$-Harnack chain condition} if 
for every $\rho>0$, $k\geq 1$, and every pair of points
$x,y \in \Omega$ 
with $\dist(x,\partial\Omega),\,\dist(y,\partial\Omega) \geq\rho$ and 
$|x-y|<2^k\rho$, there is a chain of
open balls
$B_1,\dots,B_m \subset \Omega$, $m\leq C k$,
with $x\in B_1,\, y\in B_m,$ $B_k\cap B_{k+1}\neq \varnothing$
and $C^{-1}\diam (B_k) \leq \dist (B_k,\partial\Omega)\leq C\,\diam (B_k).$  The chain of balls is called
a {\it Harnack chain}.

For $C\geq 2$, $\Omega$ is a {\it $C$-corkscrew set} if for all $\xi\in \partial\Omega$ and $r\in(0,R)$ there is a ball of radius $r/C$ contained in $B(\xi,r)\cap \Omega$. Finally, we say that $\Omega$ is {\it $C$-non-tangentially accessible (or $C$-NTA, or just NTA)} if it satisfies the Harnack chain condition and both $\Omega$ and 
$(\overline{\Omega})^{c}$ 
are $C$-corkscrew sets.   Also, $\Omega$ is
{\it two-sided $C$-NTA} if both $\Omega$ and $(\overline{\Omega})^{c}$ are $C$-NTA. 
A chord-arc domain is an NTA domain $\Omega\subset\R^{n+1}$ whose boundary is $n$-Ahlfors regular. NTA domains were introduced by Jerison and Kenig in \cite{Jerison-Kenig}. In this type of domains, the authors showed that harmonic measure is doubling and it satisfies other remarkable properties, such as the following change of pole formula.

\begin{theorem}[{\cite[Lemma 4.11]{Jerison-Kenig}}]\label{teounif}
Let $n\geq 1$, $\Omega$ be a $C$-NTA open set in $\R^{n+1}$ and let $B$ be a ball centered in $\partial\Omega$.
Let $p_1,p_2\in\Omega$ be such that $\dist(p_i,B\cap \partial\Omega)\geq c_0^{-1}\,r(B)$ for $i=1,2$.
Then, for any Borel set $E\subset B\cap\partial\Omega$,
\begin{equation}\label{eqchangepole}
\frac{\omega^{p_1}(E)}{\omega^{p_1}(B)}\approx \frac{\omega^{p_2}(E)}{\omega^{p_2}(B)},\end{equation}
with the implicit constant depending only on $n$, $c_0$ and $C$. 
\end{theorem}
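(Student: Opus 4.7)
The plan is to construct $\Omega^+$ as the limit of an iterative Koch-snowflake-type procedure in $\R^2$, inspired by the example in \cite[Section 8]{AMT-quantcpam}. I will start with a long thin horizontal rectangle. At stage $k$, every horizontal segment of length $\ell_{k-1}$ in the current boundary polygon is replaced by a sawtooth made of many vertical teeth of height $a_k\,\ell_{k-1}$ joined by very short horizontal connectors whose total length is a small fraction of $\ell_{k-1}$. The dimensionless amplitudes $a_k$ are chosen to tend to $0$ slowly, while the number of teeth per bump tends to $\infty$ rapidly. Crucially, all teeth point in the same direction so that their outer unit normal equals $+e_1$.

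I next verify the four required geometric properties. First, because each stage-$k$ perturbation has relative height $a_k\to 0$, the limit domain $\Omega^+$ is vanishing Reifenberg flat: in any ball $B(x,r)$ centered on $\partial\Omega^+$ with $\ell_k\le r\le \ell_{k-1}$, the boundary lies in a horizontal strip of height $O(a_k r)$, so the best-approximating line $L_{B(x,r)}$ in \rf{eqnb*2} is horizontal and $N_{B(x,r)}=\pm e_2$. Second, by freezing the construction after a bounded number of additional generations one exhibits, inside each such ball, chord-arc subdomains of definite density on both sides of $\partial\Omega^+$, giving the joint big pieces of chord-arc subdomains property. Third, since every vertical tooth has outer normal $+e_1$, the equality $N=+e_1$ $\omega^+$-a.e.\ will follow once one shows that $\omega^+$ concentrates on the union of the (countably many) vertical teeth of all generations.

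This last concentration statement is exactly Lemma \ref{lemvertical}, whose proof identifies $\omega^+$ with the exit distribution of planar Brownian motion in $\Omega^+$ and analyzes, at each generation $k$, the conditional probability that the Brownian path first meets a horizontal connector inside a given bump rather than a vertical tooth. After conditioning on hitting that bump, the horizontal location of the walk projects to a one-dimensional random walk on the teeth, and the central limit theorem yields that the probability of exiting through a horizontal connector is bounded by some $\varepsilon_k\to 0$. A product/Borel–Cantelli argument then gives that, in the limit, the Brownian path exits almost surely through a vertical tooth. Combining the two conclusions, $N=+e_1$ $\omega^+$-a.e.\ but $N_{B(x,r)}=\pm e_2$ on the family of intermediate scales identified above, so $|N-N_{B(x,r)}|\equiv\sqrt{2}$ on a set of positive $\omega^+$-density in every neighborhood of a generic boundary point. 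This directly contradicts \rf{eqnb*} and, via the implication (a)\;$\Leftrightarrow$\;(b) of Theorem A, forces $\log(d\omega^-/d\omega^+)\notin\vmo(\omega^+)$.

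I expect the main obstacle to be the probabilistic step behind Lemma \ref{lemvertical}: one needs a uniform quantitative CLT estimate, valid across all bumps at every generation, saying that the first-hit probability on a horizontal connector is at most some $\varepsilon_k$ that tends to $0$ rapidly enough for the per-generation loss to be summable. A secondary but still nontrivial difficulty is the joint tuning of the parameters $a_k$ and the tooth counts so that vanishing Reifenberg flatness, the joint chord-arc subdomain property, and the vertical-concentration estimate hold simultaneously; in particular the teeth must be thin enough for the CLT to bite but not so thin that interior corkscrew balls of controlled size are lost on either side of $\partial\Omega^+$.
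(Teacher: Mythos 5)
Your proposal does not address the statement it is supposed to prove. The statement is Theorem \ref{teounif}, the Jerison--Kenig change-of-pole estimate \rf{eqchangepole}: in a $C$-NTA open set, for two poles $p_1,p_2$ with $\dist(p_i,B\cap\partial\Omega)\geq c_0^{-1}r(B)$, the ratios $\omega^{p_i}(E)/\omega^{p_i}(B)$ are comparable for every Borel $E\subset B\cap\partial\Omega$. This is an external result, quoted from \cite[Lemma 4.11]{Jerison-Kenig}, and the paper offers no proof of it; a genuine proof would run through the standard NTA machinery --- the Harnack chain condition to move poles, the boundary Harnack/comparison principle for nonnegative harmonic functions vanishing on $\partial\Omega\cap B$, and the kernel-function representation of $\omega^{p}(E)/\omega^{p}(B)$ --- none of which appears anywhere in your text. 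What you have written instead is an outline of the snowflake construction behind Theorem \ref{teoex}, i.e.\ the main counterexample of the paper, which is an entirely different statement.

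Even read as an attempted proof of Theorem \ref{teoex}, your outline diverges from the paper on the key technical points. The paper does not use a sawtooth of vertical teeth: it runs a Koch-type iteration with small angle $\alpha=|\gamma_L|/M_n$ together with a freezing rule for segments whose outer normal reaches $(1,0)$; the abundance of such segments (Lemma \ref{lemvertical}) is established by a purely combinatorial central limit theorem applied to the i.i.d.\ angle increments indexed by the coding set $I_n=\{1,2,3,4\}^{M_n^2}$, not by analyzing first-hitting probabilities of planar Brownian motion with a Borel--Cantelli argument. Likewise, the fact that the union $F$ of (halves of) vertical segments carries full harmonic measure (Lemma \ref{lemfull}) is deduced from the joint-big-pieces-of-chord-arc-subdomains property, Azzam's Theorem \ref{teo-azz}, the $A_\infty$ property of harmonic measure on chord-arc domains, the maximum principle, and --- notably --- the very change-of-pole formula of Theorem \ref{teounif} that you were asked to prove. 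So the probabilistic exit-distribution argument you propose is not only aimed at the wrong statement, it also does not match the mechanism the paper actually uses where such an argument might have been relevant.
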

\vv

Given a set $E\subset\R^{n+1}$, $x\in \mathbb{R}^{n+1}$, $r>0$, and a hyperplane $P$, we set
\begin{equation}\label{eqDE}
D_{E}(x,r,P)=r^{-1}\max\left\{\sup_{y\in E\cap B(x,r)}\dist(y,P), \sup_{y\in P\cap B(x,r)}\dist(y,E)\right\}.
\end{equation}
We also define
\begin{equation}\label{eqDE2}
D_{E}(x,r)=\inf_{P}D_{E}(x,r,P)
\end{equation}
where the infimum is taken over all hyperplanes $P$. Given $\delta, R>0$, a set $E\subset\R^{n+1}$ is {\em $(\delta,R)$-Reifenberg flat} (or just $\delta$-Reifenberg flat) if $D_{E}(x,r)<\delta$ for all $x\in E$ and $0<r\leq R$, and it is {\it vanishing Reifenberg flat} if 
\[
\lim_{r\rightarrow 0} \sup_{x\in E} D_{E}(x,r)=0.\]

Let $\Omega\subset \R^{n+1}$ be an open set, and let $0<\delta<1/2$. We say that $\Omega$
is a $(\delta,R)$-Reifenberg flat domain (or just $\delta$-Reifenberg flat) if it satisfies the following conditions:
\begin{itemize}
\item[(a)] $\partial\Omega$ is $(\delta,R)$-Reifenberg flat.

\item[(b)] For every $x\in\partial \Omega$ and $0<r\leq R$, denote by $P(x,r)$ an $n$-plane that minimizes $D_{E}(x,r)$. Then one of the connected components of 
$$B(x,r)\cap \bigl\{x\in\R^{n+1}:\dist(x,P(x,r))\geq 2\delta\,r\bigr\}$$
is contained in $\Omega$ and the other is contained in $\R^{n+1}\setminus\Omega$.
\end{itemize}
If, additionally,  $\partial\Omega$ is vanishing Reifenberg flat, then $\Omega$ is said to be vanishing Reifenberg flat, too.
It is well known that if $\Omega$ is a $\delta$-Reifenberg flat domain, with $\delta$ small enough,
then it is also an NTA domain (see \cite{Kenig-Toro-duke}).

{
Given two NTA domains $\Omega^+\subset \R^{n+1}$ and $\Omega^-=\R^{n+1}\setminus \overline{\Omega^+}$, 
we say that $\Omega^+$ and $\Omega^-$ have  joint big pieces of chord-arc subdomains if for any ball $B$ centered in $\partial\Omega^+$ with radius at most $\diam (\partial\Omega^+)$ there are two chord-arc domains
$\Omega_{B}^s\subset \Omega^s$, with $s=+,-$, such that $\HH^n(\partial\Omega_B^+\cap 
\partial\Omega_B^-\cap B)\gtrsim r(B)^n$. }

 \vv


\subsection{The space $\vmo$}

Given a Radon measure $\mu$ in $\R^{n+1}$, $f\in L^1_{loc}(\mu)$, and $A\subset \R^{n+1}$, we write
$$m_{\mu,A}(f) = \avint_A f\,d\mu=
\frac1{\mu(A)}\int_A f\,d\mu.$$
Assume $\mu$ to be doubling.
We say that $f\in \vmo(\mu)$ if
\begin{equation}\label{defvmo}
\lim_{r\rightarrow 0}\sup_{x\in \supp \mu}\,\, \avint_{B(x,r)} \left| f- m_{\mu,B(x,r)}(f)\right|^{2}d\mu =0.
\end{equation}
It is well known  that the space $\vmo(\mu)$  coincides with the closure of the set of bounded uniformly continuous functions on $\supp \mu$ in the BMO norm.

\vvv

\section{Construction of the domain}

We will construct a domain $\Omega=\Omega^+\subset\R^2$ by a limiting procedure. First we let $\Omega_0$ be the interior of a regular polygon with 100 sides, say, with side length equal to $1$. Given $\Omega_n$, to construct $\Omega_{n+1}$, we assume that $\Omega_n$ is a Jordan domain such that its boundary is a piecewise linear curve made up
of $m_n$ closed segments $L_1^n,\ldots,L_{m_n}^n$ with equal length $\ell_n$. We suppose that they are ordered clockwise, so that there are points $z_1^n,\ldots,z_{m_n}^n\in\partial\Omega_n$ such that 
$L_j^n=[z_j^n,z_{j+1}^n]$ for each $j=1,\ldots,m_n$, with $z_{m_n+1}^n=z_1^n$. We assume also that, if $B_j^n$ is a closed ball centered in the mid-point of $L_j^n$ with radius $\ell_n/2$, then one component of $B_j^n\setminus L_j^n$ is contained in $\Omega_n$ and the other in $\R^2\setminus \Omega_n$.

The first step to construct $\Omega_{n+1}$ consists in replacing each segment $L_j^n$ by a piecewise linear Jordan arc $\Gamma_j^{n}$ with the same end-points as $L_j^n$.
\vv

\subsection{Construction of $\Gamma_j^{n}$}

To shorten notation, denote $L=L_j^n$, and let $N(L)$ be the outer unit normal of $\Omega_n$ at $L$. 
Denote by $\gamma_L$ the angle between $N(L)$ and the horizontal vector $(1,0)$. We adopt the convention that
$\gamma_L\in (-\pi,\pi]$. 
We will define $\Gamma_j^n$ by a snowflake type construction. First we denote
$$\alpha=\alpha_L=\frac{|\gamma_L|}{M_{n}},$$
where $M_{n}$ is a big integer (say $M_{n}\geq 20$) which will be chosen inductively later. For the moment, let us 
say that $M_1=20$ and that $\{M_n\}_n$ will be a monotone sequence of natural numbers tending to $\infty$.
Now we start the usual construction of an arc of the Koch snowflake with angle $\alpha$ instead of $\pi/3$. To this end,
we denote $\ell=\HH^1(L)$ (so $\ell=\ell_n$) and
we replace $L$ by four segments $S_1,\ldots,S_4$ with equal length
$$s:= \frac1{2(1+\cos\alpha)}\,\ell,$$
as in Figure \ref{figure1}. That is, in the case when $L=[(0,0),(\ell,0)]$ is a horizontal segment and $N(L)=(0,1)$, we 
consider the points (in complex notation)
$$y_0=0,\quad y_1=s,\quad y_2 = \frac\ell2 + i\,s\,\sin\alpha,\quad y_3 = \ell-s,\quad y_4=\ell,$$
and we let $S_i = [y_{i-1},y_i]$.

\usetikzlibrary {turtle}

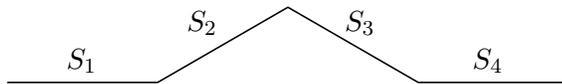
\begin{figure}
\begin{center}
\vvv
\begin{tikzpicture}
\draw  (10mm,3mm) node {$S_1$};
 \draw  (26mm,8mm) node {$S_2$};
 \draw  (47mm,8mm) node {$S_3$};
 \draw  (64mm,3mm) node {$S_4$};

\tikz[turtle/distance=20mm]
 \draw [semithick,turtle={home,rt,
  fd, %
  lt=30,
  fd, %
  rt=60,
  fd, %
  lt=30,
  fd %
 }];

 \end{tikzpicture}
\caption{The segments $S_1$, $S_2$, $S_3$, $S_4$, with $\alpha=\pi/6$.}\label{figure1}
\end{center}
\end{figure}
\vvv

For an arbitrary segment $L$, we define $S_1,\ldots,S_4$ so that after a suitable translation and rotation we are in the preceding situation.
We denote by $F(L)$ the curve generated in this way. That is, 
$$F(L) = S_1\cup S_2\cup S_3\cup S_4.$$
We also denote by $N(S_i)$ the unit normal to each vector $S_i$, so that the angle between $N(L)$ and $N(S_i)$ is at most $\alpha$. In other words, $N(S_i)$ is the outer unit normal at $S_i$ of the domain enclosed by the curve obtained by replacing $L$ by $F(L)$ in $\partial\Omega_n$.

In the first iteration, we let $\Gamma_1(L)= F(L)$. To construct, $\Gamma_2(L)$ we iterate the construction in the usual way: we let 
$$\Gamma_2(L) = F(S_1)\cup F(S_2)\cup F(S_3) \cup F(S_4),$$
with $F(S_i)$ defined in the same way as $F(L)$, with $L$ replaced by $S_i$.
We denote the four segments which compose $F(S_i)$ by $S_{i,1},\ldots,S_{i,4}.$ To construct $\Gamma_3(L)$ we proceed similarly. 

However, we introduce a special rule in the iteration of the next curves $\Gamma_{k+1}(L)$:
if one of the segments $S_{i_1,\ldots,i_k}$ of $\Gamma_k(L)$ is vertical and the associated outer normal $N(S_{i_1,\ldots,i_k})$ is the vector $(1,0)$, then in the construction of $\Gamma_{k+1}(L)$ we keep 
$S_{i_1,\ldots,i_k}$ unchanged. 
That is, for a segment $S_{i_1,\ldots,i_k}$ we let
\begin{equation}\label{eqftilde}
\wt F(S_{i_1,\ldots,i_k}) = \left\{\begin{array}{ll} F(S_{i_1,\ldots,i_k}) & \quad \text{if $N(S_{i_1,\ldots,i_k})\neq (1,0),$}\\
S_{i_1,\ldots,i_k}& \quad \text{if $N(S_{i_1,\ldots,i_k}) = (1,0).$}
\end{array}\right.
\end{equation}
In the latter case, we let $S_{i_1,\ldots,i_k,1},\cdots,S_{i_1,\ldots,i_k,4}$ be the four closed segments obtained 
by splitting $S_{i_1,\ldots,i_k}$ into four segments with disjoint interiors and the same length, and we let
$N(S_{i_1,\ldots,i_j,h})= (1,0)$ for $h=1,2,3,4$.
Then we let
$$\Gamma_{k+1}(L) = \bigcup_{1\leq i_1,\ldots,i_k\leq 4} \wt F(S_{i_1,\ldots,i_k}).$$
Notice that, by the definition of $\alpha$, the first appearance of a vertical segment 
$S_{i_1,\ldots,i_k}$ such that $N(S_{i_1,\ldots,i_k}) = (1,0)$ when $\alpha\neq0$ occurs for $k=M_{n}$, and so the last definition is coherent with the construction described for the first curves $\Gamma_1(L),\Gamma_2(L),\Gamma_3(L)$.

We iterate this construction $M_n^2\equiv(M_{n})^2$ times, and we let
$$\Gamma_j^{n} = \Gamma_{M_{n}^2}(L).$$ 
Observe that $\Gamma_j^{n}$ is made up of $4^{M_n^2}$ segments $S_{i_1,\ldots,i_{M_n^2}}$. See Figure \ref{figure2} (where we took $M_n=2$ for an easy viewing of the vertical segments with associated normal $(1,0)$).

\begin{figure}
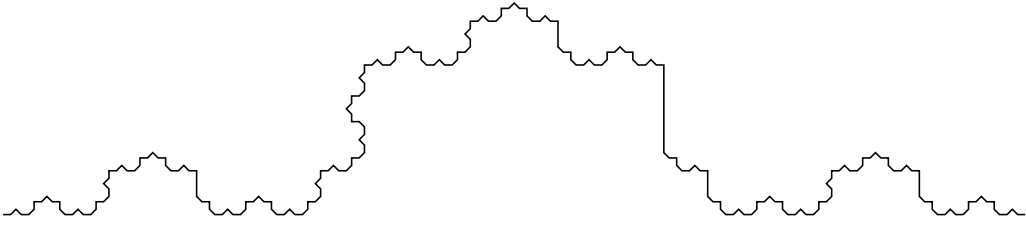

\begin{center}
\vvv
\tikz[turtle/distance=1mm]
  \draw [semithick,turtle={home,rt,
   fd,lt=45,fd,rt=90,fd,lt=45,fd,  %
  lt=45,
   fd,lt=45,fd,rt=90,fd,lt=45,fd,  %
  rt=90,
   fd,lt=45,fd,rt=90,fd,lt=45,fd,  %
  lt=45,
   fd,lt=45,fd,rt=90,fd,lt=45,fd,  %
  lt=45,
   fd,lt=45,fd,rt=90,fd,lt=45,fd,  %
  lt=45,
   fd,lt=45,fd,rt=90,fd,lt=45,fd,  %
  rt=90,
   fd,lt=45,fd,rt=90,fd,lt=45,fd,  %
  lt=45,
   fd,lt=45,fd,rt=90,fd,lt=45,fd,  %
  rt=90,
   fd,lt=45,fd,rt=90,fd,lt=45,fd,  %
  lt=45,
   fd,lt=45,fd,rt=90,fd,lt=45,fd,  %
  rt=90,
  fd=3.414mm, 
  lt=45,
   fd,lt=45,fd,rt=90,fd,lt=45,fd,  %
  lt=45,
   fd,lt=45,fd,rt=90,fd,lt=45,fd,  %
  lt=45,
   fd,lt=45,fd,rt=90,fd,lt=45,fd,  %
  rt=90,
   fd,lt=45,fd,rt=90,fd,lt=45,fd,  %
  lt=45,
   fd,lt=45,fd,rt=90,fd,lt=45,fd,  %
  lt=45,
   fd,lt=45,fd,rt=90,fd,lt=45,fd,  %
  lt=45,
   fd,lt=45,fd,rt=90,fd,lt=45,fd,  %
  rt=90,
   fd,lt=45,fd,rt=90,fd,lt=45,fd,  %
  lt=45,
   fd,lt=45,fd,rt=90,fd,lt=45,fd,  %
  lt=45,
   fd,lt=45,fd,rt=90,fd,lt=45,fd,  %
  lt=45,
   fd,lt=45,fd,rt=90,fd,lt=45,fd,  %
  rt=90,
   fd,lt=45,fd,rt=90,fd,lt=45,fd,  %
  lt=45,
   fd,lt=45,fd,rt=90,fd,lt=45,fd,  %
  rt=90,
   fd,lt=45,fd,rt=90,fd,lt=45,fd,  %
  lt=45,
   fd,lt=45,fd,rt=90,fd,lt=45,fd,  %
  rt=90,
   fd,lt=45,fd,rt=90,fd,lt=45,fd,  %
  lt=45,
   fd,lt=45,fd,rt=90,fd,lt=45,fd,  %
  lt=45,
   fd,lt=45,fd,rt=90,fd,lt=45,fd,  %
  lt=45,
   fd,lt=45,fd,rt=90,fd,lt=45,fd,  %
  rt=90,
   fd,lt=45,fd,rt=90,fd,lt=45,fd,  %
  lt=45,
   fd,lt=45,fd,rt=90,fd,lt=45,fd,  %
  rt=90,
   fd,lt=45,fd,rt=90,fd,lt=45,fd,  %
  lt=45,
   fd,lt=45,fd,rt=90,fd,lt=45,fd,  %
  rt=90,
  fd=3.414mm, 
  lt=45,
   fd,lt=45,fd,rt=90,fd,lt=45,fd,  %
  lt=45,
   fd,lt=45,fd,rt=90,fd,lt=45,fd,  %
  lt=45,
   fd,lt=45,fd,rt=90,fd,lt=45,fd,  %
  rt=90,
   fd,lt=45,fd,rt=90,fd,lt=45,fd,  %
  lt=45,
   fd,lt=45,fd,rt=90,fd,lt=45,fd,  %
  rt=90,
  fd=11.657mm,
  lt=45,
   fd,lt=45,fd,rt=90,fd,lt=45,fd,  %
  lt=45,
   fd,lt=45,fd,rt=90,fd,lt=45,fd,  %
  rt=90,
  fd=3.414mm, 
  lt=45,
   fd,lt=45,fd,rt=90,fd,lt=45,fd,  %
  lt=45,
   fd,lt=45,fd,rt=90,fd,lt=45,fd,  %
  lt=45,
   fd,lt=45,fd,rt=90,fd,lt=45,fd,  %
  rt=90,
   fd,lt=45,fd,rt=90,fd,lt=45,fd,  %
  lt=45,
   fd,lt=45,fd,rt=90,fd,lt=45,fd,  %
  lt=45,
   fd,lt=45,fd,rt=90,fd,lt=45,fd,  %
  lt=45,
   fd,lt=45,fd,rt=90,fd,lt=45,fd,  %
  rt=90,
   fd,lt=45,fd,rt=90,fd,lt=45,fd,  %
  lt=45,
   fd,lt=45,fd,rt=90,fd,lt=45,fd,  %
  rt=90,
   fd,lt=45,fd,rt=90,fd,lt=45,fd,  %
  lt=45,
   fd,lt=45,fd,rt=90,fd,lt=45,fd,  %
  rt=90,
  fd=3.414mm, 
  lt=45,
   fd,lt=45,fd,rt=90,fd,lt=45,fd,  %
  lt=45,
   fd,lt=45,fd,rt=90,fd,lt=45,fd,  %
  lt=45,
   fd,lt=45,fd,rt=90,fd,lt=45,fd,  %
  rt=90,
   fd,lt=45,fd,rt=90,fd,lt=45,fd,  %
  lt=45,
  fd,lt=45,fd,rt=90,fd,lt=45,fd
  }];
\end{center}
\vvv

\caption{The curve $\Gamma_j^n$ generated by a horizontal segment $L_j^n$ with parameters $N(L_j^n)=(1,0)$, $\alpha=\pi/4$, and $M_n=2$.}
\label{figure2}
\end{figure}

\vv
\subsection{The chord-arc property of $\Gamma_j^n$}

Notice that for $1\leq k\leq M_n^2$, 
\begin{equation}\label{eq2ineq*}
\frac1{4}\,\HH^1(S_{i_1,\ldots,i_{k-1}})\leq \HH^1(S_{i_1,\ldots,i_{k}})
\leq \frac1{2(1+\cos\alpha)}\,\HH^1(S_{i_1,\ldots,i_{k-1}}).
\end{equation}
Thus, 
\begin{equation}\label{eq2ineq*4}
4^{-k}\ell\leq 
\HH^1(S_{i_1,\ldots,i_{k}})\leq \left(\frac1{2(1+\cos\alpha)}\right)^{k}\,\ell 
\quad\mbox{ for $1\leq k\leq M_n^2$.}
\end{equation}
We can also write  the second inequality as follows:
$$\HH^1(S_{i_1,\ldots,i_{k}})\leq 4^{-k} \left(\frac{2}{1+\cos\alpha}\right)^{k}\,\ell
= 4^{-k}\left(1+ \frac{1-\cos\alpha}{1+\cos\alpha}\right)^k\,\ell.$$
Next we use the fact that, by the definition of $\alpha$,
$$1+ \frac{1-\cos\alpha}{1+\cos\alpha} \leq1 + (1-\cos\alpha) \leq 1 +C\alpha^2\leq 1 + \frac{C}{M_n^2}.$$
Thus, using also that $k\leq M_n^2$,
\begin{equation}\label{eq2ineq*5}
\HH^1(S_{i_1,\ldots,i_{k}})\leq 4^{-k}\left(1+ \frac{1-\cos\alpha}{1+\cos\alpha}\right)^{M_n^2}\,\ell\leq
4^{-k}\left(1+ \frac{C}{M_n^2}\right)^{M_n^2}\,\ell \lesssim 4^{-k}\ell.
\end{equation}
Consequently,
 since $\Gamma_j^n$ consists of $4^{M_n^2}$ segments of the form $S_{i_1,\ldots,i_{M_n^2}}$, we derive
\begin{equation}\label{eqcorarc10}
\HH^1(\Gamma_j^n)\leq 4^{M_n^2}\,\sup_{i_1,\ldots,i_{M_n^2}} \HH^1(S_{i_1,\ldots,i_{M_n^2}}) \lesssim 
4^{M_n^2}\,4^{-M_n^2}\,\ell = \ell.
\end{equation}

The above calculation suggests that $\Gamma_j^n$ is an Ahlfors regular curve. In the next lemma we show that this is the case. In fact,
we will prove that this is a bilipschitz image of the segment $L$, thus a chord-arc curve, and in particular an Ahlfors regular curve.

\begin{lemma}\label{lembilip}
Suppose that $\alpha$ is small enough.
For each $k=1,\ldots,M_n^2$, the curve $\Gamma_k(L)$ is a bilipschitz image of the segment $L$. 
That is, there exist a bijective map $\vphi_k:L\to \Gamma_k(L)$ and a constant $C_k$ such that for all 
$x,y\in L$,
\begin{equation}\label{eqbilip}
C_k^{-1}|x-y|\leq |\vphi_k(x) - \vphi_k(y)|\leq C_k\,|x-y|.
\end{equation}
Further the constant $C_k$ is bounded above by some absolute constant. 
\end{lemma}

Of course, in particular the lemma implies that  $\Gamma_j^n$ is an Ahlfors regular curve, with the
Ahlfors regularity constant bounded by an absolute constant.


\begin{proof}
We argue by induction. The case $k=1$ is clear. Fix now $k\in [2,M_n^2]$, and suppose that $\Gamma_j(L)$ is a chord-arc curve and that
\rf{eqbilip} holds for $1\leq j\leq k-1$. Denote $L=\Gamma_0(L)$. 
For $0\leq j\leq k-1$, we define a map $\Pi_j:\Gamma_j(L) \to \Gamma_{j+1}(L)$ as follows. 
In the case $j=0$, we let $\Pi_0$ be equal to the projection from $L$ to $\Gamma_1(L)$ orthogonal to $L$.
In the case $j\geq1$, recall that
$$\Gamma_j(L) =\bigcup_{1\leq i_1,\ldots,i_j\leq 4} S_{i_1,\ldots,i_j}.$$
Then, on each segment $S_{i_1,\ldots,i_j}$ we let $\Pi_j|_{S_{i_1,\ldots,i_j}}$ be equal to the projection from 
$S_{i_1,\ldots,i_j}$ to $\wt F(S_{i_1,\ldots,i_j})$ orthogonal to $S_{i_1,\ldots,i_j}$ (recall that $\wt F(S_{i_1,\ldots,i_j})$ is defined in \rf{eqftilde}).
Notice that $\Gamma_j$ does not have self-intersections, since it is a chord-arc curve, by assumption. So 
the definition of $\Pi_j$ is correct because each $z\in\Gamma_j(L)$ belongs at most to one segment $S_{i_1,\ldots,i_j}$,
except in the case when $z$ is a common end point of two adjacent segments of the form $S_{i_1,\ldots,i_j}$ (in which case $\Pi_j(z)=z$).

We define
$$\vphi_k= \Pi_{k-1}\circ\cdots \Pi_1\circ\Pi_0.$$
By construction it is clear that $\vphi_k:L\to \Gamma_k(L)$ is continuous, piecewise affine, and surjective. To check that \rf{eqbilip} holds for all $x,y\in L$, notice first that, by the definition of $\vphi_k$ and the construction of the $\Gamma_k(L)$, we can split $L$ into $4^k$ segments 
$I_{i_1,\ldots,i_k}$, with $1\leq i_1,\ldots,i_k\leq 4$, with disjoint interiors, with $\HH^1(I_{i_1,\ldots,i_k}) = 4^{-k}\ell$, such that
$$\vphi_k(I_{i_1,\ldots,i_k}) = S_{i_1,\ldots,i_k}\quad \mbox{ for $1\leq i_1,\ldots,i_k\leq 4$,}$$
and with $\vphi_k|_{I_{i_1,\ldots,i_k}}$ being affine in $I_{i_1,\ldots,i_k}$.
Then, recalling that, by \rf{eq2ineq*4} and \rf{eq2ineq*5}, 
$$\HH^1(S_{i_1,\ldots,i_k}) \approx  \HH^1(I_{i_1,\ldots,i_k})=4^{-k}\ell,$$
we deduce $|\nabla\vphi_k|\approx1$ in $I_{i_1,\ldots,i_k}$. By connectivity, this implies that $\vphi_k$ is Lipschitz on $L$, with
${\rm Lip}(\vphi_k)\lesssim1$. Remark that almost the same argument shows that
$\Pi_{m}\circ\cdots \Pi_1\circ\Pi_0$ is affine in $I_{i_1,\ldots,i_m}$, for $0\leq m \leq k-1$ and
\begin{equation}\label{eausd5}
|\nabla(\Pi_{m}\circ\cdots \Pi_1\circ\Pi_0)|\approx1.
\end{equation}

It remains to check that
\begin{equation}\label{eqxygeq}
|\vphi_k(x) - \vphi_k(y)|\gtrsim |x-y|\quad \mbox{ for all $x,y\in L$.}
\end{equation}
To this end,  for 
$1\leq j\leq k$, we denote $$x_{j}= \Pi_{j-1}\circ\cdots \Pi_1\circ\Pi_0(x),\quad
y_j= \Pi_{j-1}\circ\cdots \Pi_1\circ\Pi_0(y),
$$
so that, in particular, $\vphi_k(x)=x_k$ and $\vphi_k(y)=y_k$. Notice also that $x_j,y_j\in \Gamma_j(L)$.
Observe also that, by the definition of $\Pi_j$ and $F(S_{i_1,\ldots,i_j})$ and \rf{eq2ineq*5} with $k=j$,
one easily gets 
\begin{equation}\label{eqfac837}
|x_{j+1}-x_j|\leq \HH^1(S_{i_1,\ldots,i_j})\,\sin\alpha \leq C\,4^{-j}\,\ell\,\sin\alpha.
\end{equation}

Let $h\geq0$ be the integer such that
\begin{equation}\label{eqhhh444}
4^{-h-1}\ell\leq |x-y|< 4^{-h}\ell.
\end{equation}
 Suppose first that $h\leq k$. Then either exists a segment $I_{i_1,\ldots,i_h}\subset L$ such that
$$x,y \in I_{i_1,\ldots,i_h},$$
or there are two adjacent segments (i.e, with a common end-point) $I_{i_1,\ldots,i_h},\, I_{j_1,\ldots,j_h}\subset L$ such that
$$x \in I_{i_1,\ldots,i_h},\qquad y \in I_{j_1,\ldots,j_h}.$$
In the first case, since $\Pi_{h-1}\circ\cdots \Pi_1\circ\Pi_0$ is affine in $I_{i_1,\ldots,i_{h}}$, by \rf{eausd5} we have
\begin{equation}\label{eqcase1**}
|x-y| \approx |x_h-y_h|.
\end{equation}
In the second case, this also holds. Indeed, let $z$ be the common end-point of $I_{i_1,\ldots,i_h}$ and $I_{j_1,\ldots,j_h}$, and let 
$z_h= \Pi_{h-1}\circ\cdots \Pi_1\circ\Pi_0(z)$. Then,
\begin{equation}\label{eqcase2**}
|x-y| = |x-z| + |z-y| \approx |x_h-z_h| + |z_h-y_h| \approx |x_h-y_h|,
\end{equation}
where we applied \rf{eqcase1**} 
twice in the first estimate to show that $|x-z|\approx |x_h-z_h|$ and $|z-y| \approx  |z_h-y_h|$.
 In the last estimate in \rf{eqcase2**} we used the fact that since the two segments $I_{i_1,\ldots,i_h}$ and $I_{j_1,\ldots,j_h}$ are adjacent in $L$, then $S_{i_1,\ldots,i_h}$ and $S_{j_1,\ldots,j_h}$ are adjacent segments in $\Gamma_h(L)$ and they form an angle
either equal to $\pi-\alpha$ or $\pi$, with $0<\alpha\ll1$, by the construction above.

The estimate \rf{eqcase1**}, together with \rf{eqfac837}, gives
\begin{align*}
|\vphi_k(x) - \vphi_k(y)| & = |x_k-y_k|\geq |x_h-y_h| - \sum_{m=h}^{k-1} |x_m-x_{m+1}| - \sum_{m=h}^{k-1} |y_m-y_{m+1}|\\
&\geq c\,|x-y| - C\,4^{-h}\,\ell\,\sin\alpha \gtrsim |x-y|,
\end{align*}
assuming $\alpha$ small enough in the last inequality.

In the case when the integer $h$ in \rf{eqhhh444} is larger than $k$, then either exists a segment $I_{i_1,\ldots,i_k}$ such that
$x,y \in I_{i_1,\ldots,i_k},$
or there are two adjacent segments $I_{i_1,\ldots,i_k}$, $I_{j_1,\ldots,j_k}$, such that
$x \in I_{i_1,\ldots,i_k}$ and $y \in I_{j_1,\ldots,j_k}$. Then, we deduce that either \rf{eqcase1**} or \rf{eqcase2**} hold
replacing $x_h,y_h,z_h$ by $x_k,y_k,z_k$, by the same arguments above. So in any case, \rf{eqxygeq} holds, which concludes the proof of the
lemma.
\end{proof}
\vv


The curves $\Gamma_j^n$ also satisfy the following:

\begin{lemma}\label{lemflat}
For every $\xi\in\Gamma_j^n$ and $r>0$, there exists a line $L_{\xi,r}$ such that
\begin{equation}\label{eqgjnalf}
\Gamma_j^n\cap B(\xi,r)\subset \UU_{c\alpha r}(L_{\xi,r})\cap B(\xi,r).
\end{equation}
\end{lemma}

In the lemma, $\UU_{s}(A)$ stands for the $s$-neighborhood of $A$.

\begin{proof}
We will use the same notation as in the proof of Lemma \ref{lembilip}.

We may assume that $r\leq \ell$.
Let $k\geq0$ be the integer such that
$4^{-k-1}\ell\leq r< 4^{-k}\ell$.
 Suppose first that $k\leq M_n^2$.  
Let $x=(\vphi_{M_n^2})^{–1}(\xi)$ and let $x_k=\vphi_k(x)$. 
Let $S_{i_1,\ldots,i_k}$ be such that $x_k\in S_{i_1,\ldots,i_k}$ and let $L_{\xi,r}$ be the line supporting $S_{i_1,\ldots,i_k}$. As in the proof of Lemma \ref{lembilip}, denote $I_{i_1,\ldots,i_k} =\vphi_k^{-1}(S_{i_1,\ldots,i_k})$.
  For some constant $A_0>10$ to be fixed below, let
$\mathcal I$ the family of segments $I_{j_1,\ldots,j_k}$ such that
$$\dist(I_{i_1,\ldots,i_k},I_{j_1,\ldots,j_k})\leq A_0\,4^{-k}\ell,$$
and let 
$\mathcal I'= \{\vphi_k(J):J\in \mathcal I\}.$
Since the angle between $S_{i_1,\ldots,i_k}$ and any segment $S\in\mathcal I'$ is between  $\pi-C(A_0)\alpha$ and $\pi$,
it follows that 
$S\subset \UU_{C(A_0)\alpha r}(L_{\xi,r})$ for any $S\in\mathcal I'$. On the other hand, 
for $S\not\in\mathcal I'$ and $A_0$ large enough, we have $S\cap B(x_k,2r)=\varnothing$ by the chord-arc property of $\Gamma_k(L)$.
Thus 
\begin{equation}\label{eqgammah}
\Gamma_k(L) \cap B(x_k,2r)\subset \UU_{C(A_0)\alpha r}(L_{\xi,r}).
\end{equation}
From \rf{eqfac837}, it easily follows that 
$\Gamma_j^n= \Gamma_{M_n^2}(L)\subset \UU_{c\alpha 4^{-k}}(\Gamma_k(L)).$
The lemma is deduced from this fact and \rf{eqgammah}.
The proof for the case $h>M_n^2$ is similar, and even simpler.
\end{proof}
\vv

For a point $z\in\R^2$, a line $\Lambda\subset \R^2$, and $\beta>0$, we denote by $X(z,\Lambda,\beta)$ the cone
$$X(z,\Lambda,\beta):= \{y\in\R^2:\dist(z,\Lambda)\leq \beta\,|y-z|\}.$$
Another useful property of $\Gamma_j^n$ is the following: if $x_{L_j^n},y_{L_j^n}$ are the end-points of $L^n_j$ and $\Lambda_j^n$ is the line supporting $L_j^N$, then 
\begin{equation}\label{eqgammah23}
\Gamma_j^n\subset X(x_j^n,\Lambda_j^n,C\alpha) \cap X(y_j^n,\Lambda_j^n,C\alpha).
\end{equation}
This can deduced by a quick inspection of the construction of $\Gamma_j^n$. 
We leave the details for the reader.

\vv
\subsection{The abundance of vertical segments in the curve $\Gamma_j^n$}

Denote by $\mathcal V(\Gamma_j^n)$ the subfamily of the segments $S_{i_1,\ldots,i_{M_n^2}}$, with $1\leq i_1,\ldots,i_{M_n^2}\leq4$, which are vertical and such that $N(S_{i_1,\ldots,i_{M_n^2}})=(1,0)$.
Before going on with the construction of $\Omega_{n+1}$, we will prove the abundance of that type of segments. This will be the key property that we will use below to show that the outer unit normal to $\Omega$ equals $(1,0)$ $\omega^+$-a.e.
The precise result we need is the following:

\begin{lemma}\label{lemvertical}
There exists an absolute constant $c_1>0$ such that, for any choice of $M_n$ large enough,
\begin{equation}\label{eqsumguai}
\sum_{S\in \mathcal V(\Gamma_j^n)} \HH^1(S) \geq c_1\,\HH^1(\Gamma_j^n).
\end{equation}
\end{lemma}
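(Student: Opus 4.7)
My plan is to identify the level-$M_n^2$ segments of $\Gamma_j^n$ with sequences $\mathbf i=(i_1,\ldots,i_{M_n^2})\in\{1,2,3,4\}^{M_n^2}$, endow this set with the uniform probability, and recast the lemma as a random-walk estimate for the outer normal's angle. The starting observation, to be read off the turtle description of $F(L)$, is that passing from a segment at level $k$ to its child of label $i_{k+1}\in\{1,2,3,4\}$ rotates the outer normal by $0$, $+\alpha$, $-\alpha$, $0$ respectively, except that once the normal equals $(1,0)$ it stays so in all descendants. Writing $\gamma_L\in(-\pi,\pi]$ for the initial angle of $N(L)$ with $(1,0)$, and letting $X_j$ be i.i.d.\ with $P(X_j=\pm\alpha)=\tfrac14$ and $P(X_j=0)=\tfrac12$, the outer normal of $S_{i_1,\ldots,i_k}$ then has angle $\gamma_L+\sum_{j=1}^{k\wedge T}X_j$, where $T=\inf\{k\ge1:\gamma_L+\sum_{j\le k}X_j\equiv 0\pmod{2\pi}\}$. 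In particular, $S_{\mathbf i}\in\mathcal V(\Gamma_j^n)$ if and only if $T\le M_n^2$, and the lemma reduces to proving $P(T\le M_n^2)\ge c_0>0$ uniformly in $M_n$ (large) and $\gamma_L$.

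By symmetry I may assume $\gamma_L>0$ (the case $\gamma_L<0$ is mirror-symmetric; $\gamma_L=0$ gives $\alpha=0$ and all level-$M_n^2$ segments already lie in $\mathcal V(\Gamma_j^n)$). Set $Y_j=X_j/\alpha\in\{-1,0,+1\}$ and $V_k=\sum_{j=1}^k Y_j$, an integer-valued symmetric random walk with $\E[Y_j]=0$ and $\mathrm{Var}(Y_j)=\tfrac12$. Because $\alpha=\gamma_L/M_n$, the angle $\gamma_L+\alpha V_k$ vanishes exactly at $V_k=-M_n$, and since $V_k$ changes by at most $1$ at each step it cannot cross this level without hitting it, so $\{\min_{k\le M_n^2}V_k\le-M_n\}\subset\{T\le M_n^2\}$. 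Applying the central limit theorem to the triangular array indexed by $M_n$, $V_{M_n^2}/(M_n/\sqrt 2)$ converges in distribution to $N(0,1)$, so
\[
P(T\le M_n^2)\ge P\bigl(V_{M_n^2}\le-M_n\bigr)\xrightarrow[M_n\to\infty]{}\Phi(-\sqrt 2)>0,
\]
which is an absolute positive constant, giving $P(T\le M_n^2)\ge c_0:=\tfrac12\Phi(-\sqrt 2)$ for $M_n$ large. This is uniform in $\gamma_L$ because the law of the $Y_j$ does not depend on $\gamma_L$.

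Finally, I convert this into the desired length bound. By the first inequality in \rf{eq2ineq*}, every level-$M_n^2$ segment has length at least $4^{-M_n^2}\ell$, and by \rf{eqcorarc10} we have $\HH^1(\Gamma_j^n)\le C\ell$. Therefore
\[
\sum_{S\in\mathcal V(\Gamma_j^n)}\HH^1(S)\ge \#\bigl\{\mathbf i:S_{\mathbf i}\in\mathcal V(\Gamma_j^n)\bigr\}\cdot 4^{-M_n^2}\ell=4^{M_n^2}P(T\le M_n^2)\cdot 4^{-M_n^2}\ell\ge c_0\,\ell\ge c_1\,\HH^1(\Gamma_j^n),
\]
with $c_1=c_0/C$, which is \rf{eqsumguai}. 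The only real subtlety is the random-walk estimate in the middle step: the point is that $V_{M_n^2}$ has standard deviation $M_n/\sqrt 2$ while the target $-M_n$ is exactly $\sqrt 2$ standard deviations away, so reaching it carries probability of order one; the uniformity in $\gamma_L$ is automatic because the distribution of the $Y_j$ does not depend on $\gamma_L$.
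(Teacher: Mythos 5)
Your proof is correct and follows essentially the same route as the paper's: you code the level-$M_n^2$ segments by $\{1,2,3,4\}^{M_n^2}$ with uniform probability, model the rotation of the normal as a sum of i.i.d.\ increments taking values $0,\pm\alpha$, observe that reaching total rotation $-M_n\alpha=-\gamma_L$ forces a hit (discrete intermediate value) which triggers the special ``stay vertical'' rule, lower bound the hitting probability by $P(\Sigma_{M_n^2}\le-M_n\alpha)$, apply the CLT to get a uniform constant of order $\Phi(-\sqrt 2)$, and convert the count to a length bound using $\HH^1(S_{\mathbf i})\ge 4^{-M_n^2}\ell$ and \rf{eqcorarc10}. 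Your version is slightly more explicit in two places that the paper leaves implicit --- the stopping time $T$ formalizing ``first time the normal becomes $(1,0)$,'' and the remark that uniformity in $\gamma_L$ comes for free because the normalized increments $Y_j=X_j/\alpha$ have a law independent of $\gamma_L$ --- but this is a matter of presentation, not a different argument.
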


\begin{proof}
We will use a probabilistic argument. We can assume that $\alpha\neq0$, because otherwise all the segments
$S_{i_1,\ldots,i_{M_n^2}}$ are vertical and their associated unit normal is $(1,0)$.
Consider the set of codings of the segments $S_{i_1,\ldots,i_{M_n^2}}$, that is,
$I_n := \{1,2,3,4\}^{M_n^2}$. Let $\mu$ be the uniform probability measure on $\{1,2,3,4\}$ (so that
$\mu(\{1\}) =\cdots=\mu(\{4\})=1/4$), and let $\mu_{I_n}= \mu\times\cdots\times\mu$ be the product measure ($M_n^2$ times) of $\mu$ on $I_n$. Consider the function 
$g:\{1,2,3,4\}\to\R$ defined by
$$g(1)=g(4)=0,\quad g(2)=\alpha,\quad g(3)=-\alpha,$$
and consider the random variables $X_1,\ldots,X_{M_n^2}$ on $I_n$ defined by
$$X_j\big((i_1,\ldots,i_{M_n^2})\big) = g(i_j).$$
It is immediate to check that the variables $X_1,\ldots,X_{M_n^2}$ are independent and identically distributed, and they have zero mean and variance $\sigma^2= \alpha^2/2.$

Notice that, for $i\in I_n$ with $S_i\not\in \mathcal V(\Gamma_j^n)$, we have that
$\Sigma_{M_n^2}(i):= X_1(i)+\ldots + X_{M_n^2}(i)$ is the angle that the segment $S_i$ has rotated with respect the initial segment\footnote{In fact, $\Sigma_{M_n^2}(i)$ coincides also with the angle that any segment from $\{S_i\}_{i\in I_n}$ would have rotated if we had not applied the special rule about the vertical segments in the construction of $\Gamma_j^n$.} $L_j^n$.
Suppose for simplicity that the angle $\gamma_L$ between $N(L)=N(L_j^n)$ and the vector $(1,0)$ is positive, i.e., $0<\gamma_L\leq\pi$.
Then, if for some given $i=(i_1,\ldots,i_{M_n^2})\in I_n$, we have $\Sigma_{M_n^2}(i_1,\ldots,i_{M_n^2})\leq -M_n\alpha$, this implies that some segment $S_{i_1,\ldots,i_k}$ has rotated clockwise by an angle equal to $M_N\alpha = \gamma_L$ with respect to $L$, so that it is vertical and $N(S_{i_1,\ldots,i_k})=(1,0)$. By construction this also ensures that $N(S_{i_1,\ldots,i_{M_n^2}})=(1,0)$. Consequently,
$$\#\cV(\Gamma_j^n)\geq \#\big\{i\in I_n:\Sigma_{M_n^2}(i)\leq -M_n\alpha\big\}$$
and thus
\begin{equation}\label{eqVV1}
\frac{\#\cV(\Gamma_j^n)}{\# I_n} \geq \mu_{I_n}\big(\{i\in I_n:\Sigma_{M_n^2}(i)\leq -M_n\alpha\}\big).
\end{equation}

Observe that
$$\frac{\Sigma_{M_n^2}}{M_n\,\sigma} = \frac{\sqrt2\,\Sigma_{M_n^2}}{M_n\,\alpha}.$$
By the central limit theorem, the random variable 
$\frac{\Sigma_{M_n^2}}{M_n\,\sigma}$ converges in law to the standard normal distribution as $M_n\to\infty$.
Therefore, for some absolute constant $\eta>0$ (determined by the standard normal distribution),
$$
 \mu_{I_n}\big(\{i\in I_n:\Sigma_{M_n^2}(i)\leq -M_n\alpha\}\big) = \mu_{I_n}\Big(\Big\{i\in I_n:\frac{\Sigma_{M_n^2}(i)}{M_n\,\sigma}\leq -\sqrt2\Big\}\Big) \to \eta \quad\mbox{ as $M_n\to\infty$.}
 $$
Consequently, by \rf{eqVV1}, 
$$\frac{\#\cV(\Gamma_j^n)}{\# I_n} \geq \frac\eta2,$$
for $M_n$ big enough.

To finish the proof of \rf{eqsumguai}, recall that any segment $S_i$, with $i\in I_n$, satisfies
$\HH^1(S_i)\geq 4^{-M_n^2}\,\HH^1(L_j^n)$. Thus,
$$\sum_{S\in \mathcal \cV(\Gamma_j^n)}\HH^1(S) \geq \#\cV(\Gamma_j^n)\,4^{-M_n^2}\,\HH^1(L_j^n)  \geq \frac\eta2\,\# I_n\,4^{-M_n^2}\HH^1(L_j^n) =\frac\eta2\,\HH^1(L_j^n)\geq c\,\eta\,\HH^1(\Gamma_j^n),$$
using \rf{eqcorarc10} for the last inequality.
\end{proof}

\vv
\subsection{The domain $\Omega_{n+1}$}

Recall that $\partial\Omega_n= \bigcup_{j=1}^{m_n} L_j^n$ and that each segment $L_j^n$ has the same end-points as
the chord-arc curve $\Gamma_j^n$. We consider the curve
$$G_{n+1} = \bigcup_{j=1}^{m_n} \Gamma_j^{n}.$$
The fact that the curves $\Gamma_j^n$ are chord-arc and the property \rf{eqgammah23} will ensure that $G_{n+1}$ is a quasicircle.

Let $V_{n+1}$ be the domain enclosed by $G_{n+1}$. 
This domain should be considered as a first approximation of $\Omega_{n+1}$. In order to guaranty the vanishing Reifenberg flatness property of the final domain $\Omega$, 
we have to modify $V_{n+1}$. Indeed, it is easy to check that the angle between the outer normals for $V_{n+1}$ at any two neighboring segments contained in the same piecewise curve $\Gamma_j^{n}$ is at most $\alpha_{L_j^n}\leq C/M_n$, and we will choose $M_n$ so that $M_n\to\infty$.
However, the angle between two consecutive segments of the curve $G_{n+1}$ which are contained in different
(but neighboring) curves $\Gamma_j^n$, $\Gamma_{j+1}^n$ is the same as the angle between $L_j^n$ and $L_{j+1}^n$, which is not convenient, because we want this to become flatter. 

To define a suitable smoothened version of $G_{n+1}$ we need some additional notation.
Let $T_1,\ldots,T_{t_{n+1}}$ the segments which compose the piecewise curve $G_{n+1}$. That is to say,
$$\big\{T_i\big\}_{1\leq i\leq t_{n+1}} = \big\{S_{i_1,\ldots,i_{M_n^2}}(\Gamma_j^n):1\leq j\leq m_n,\,1\leq i_1,\ldots,i_{M_n^2}\leq 4 \},$$
where $S_{i_1,\ldots,i_{M_n^2}}(\Gamma_j^n)$ stands for the segment $S_{i_1,\ldots,i_{M_n^2}}$ appearing in the construction of $\Gamma_j^n$.
Suppose that  $T_1,\ldots,T_{t_{n+1}}$
are ordered clockwise in the curve $G_{n+1}$. Let 
\begin{equation}\label{eqan+1}
a_{n+1}=\min_{1\leq i\leq t_{n+1}}\HH^1(T_i),
\end{equation}
and for each $i$ let $\wt T_i$ be a closed segment such that $\wt T_i\subset T_i$ with the same mid-point as $T_i$ and such that 
\begin{equation}\label{eqan+21}
\HH^1(\wt T_i)= \HH^1(T_i) - \frac {a_{n+1}}{200}.
\end{equation}
Let $C_i$ be a closed circular arc that joints $\wt T_i$ to $\wt T_{i+1}$, so that $C_i$ is tangent both to $T_i$ and $T_{i+1}$ and the tangency points coincide with the two closest end-points of $\wt T_i$ and $\wt T_{i+1}$, as in Figure \ref{figure3}. Of course, the
arc $C_i$ is determined once the end-points of each segment $\wt T_i$ are fixed by the condition \rf{eqan+21}. Also, by elementary geometry, it is clear that the angle subtended by $C_i$ is equal to the supplementary of the angle formed by the two consecutive segments
$T_i$ and $T_{i+1}$.
Then we let
$$\wt G_{n+1} = \bigcup_{1\leq i \leq t_{n+1}} (\wt T_i \cup C_i).$$
That is, roughly speaking, $\wt G_{n+1}$ is the curve obtained by erasing $T_i\setminus \wt T_i$, for $1\leq i\leq t_{n+1}$ and replacing the erased parts by circular arcs, so that the resulting curve is of type $C^1$.
We remark that the precise construction of $\wt G_{n+1}$ from $G_{n+1}$ is not very relevant. What it is important is that $\wt G_{n+1}$
is a $C^1$ quasicircle (with a uniform character), close in Hausdorff distance to $G_{n+1}$, and it coincides with $G_{n+1}$ in a big portion of each segment $T_i$.

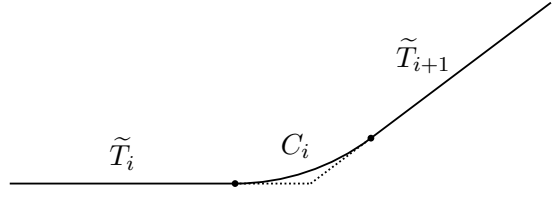
\begin{figure}
\begin{tikzpicture}[radius=30mm,line width=0.7pt]
\draw (0,0) -- (29.961mm,0);
\draw[densely dotted] (29.961mm,0) -- (40mm,0);
\draw[densely dotted] (40mm,0) -- (48.017mm,6.041mm);
\draw (48.017mm,6.041mm) -- (71.945mm,24.073mm);
  \draw (29.961mm,0)  arc[start angle=270, end angle=307]; 
  \fill (29.961mm,0) circle (0.45mm);
  \fill (48.017mm,6.041mm) circle (0.45mm);
  \draw (15mm,4mm) node {$\wt T_i$};
  \draw (55mm,17mm) node {$\wt T_{i+1}$};
  \draw (38mm,5mm) node {$C_i$};
\end{tikzpicture}
\vv

\caption{The segments $\wt T_i$, $\wt T_{i+1}$, and the arc $C_i$ that joins them. The length of each dotted segment
is $a_{n+1}/100$.}\label{figure3}
\end{figure}

The next step is to consider a family of points $z_1^{n+1},\ldots,z_{m_{n+1}}^{n+1}$ from $\wt G_{n+1}$ ordered clockwise,  with $m_{n+1}\geq t_{n+1}$, so that
$$|z_i^{n+1}-z_{i+1}^{n+1}| = |z_j^{n+1}-z_{j+1}^{n+1}|=:\ell_{n+1}\quad \mbox{for $1\leq i,j\leq m_{n+1}$,}$$
 understanding $z_{m_{n+1}+1}^{n+1} = z_1^{n+1}$. We denote $L_i^{n+1} = [z_i^{n+1},z_{i+1}^{n+1}]$.
 Further, we take $m_{n+1}$ large enough so that
 $\ell_{n+1}\leq \frac {a_{n+1}}{200}$ and so that the supplementary of the angle between two neighboring segments $L_i^{n+1}$, $L_{i+1}^{n+1}$ is at most $2^{-n-4}\pi$ (here we mean the angle equal to the smallest one of the two supplementary angles that form the two lines supporting $L_i^{n+1}$ and $L_{i+1}^{n+1}$).
 By a continuity argument and the $C^1$ character of $\wt G_{n+1}$ it is easy to prove the existence of the family of points  $z_1^{n+1},\ldots,z_{m_{n+1}}^{n+1}$. 
 
 Finally, we let $\Omega_{n+1}$ be the domain enclosed by the curve formed by the union of the segments 
 $L_1^{n+1},\ldots,L_{m_{n+1}}^{n+1}$, so that
 $$\partial\Omega_{n+1} = \bigcup_{1\leq i \leq m_{n+1}} L_i^{n+1}.$$

\vv

\subsection{The domain $\Omega$}

It is easy to check that the curves $\partial\Omega_n$ are quasi-circles which converge in Hausdorff distance to another quasi-circle $\Gamma_\infty$ as $n\to\infty$. We let $\Omega$ be the domain bounded by the quasi-circle $\Gamma_\infty$, and 
we also set $\Omega^+=\Omega$, $\Omega^-=\R^2\setminus \overline{\Omega^+}$.

Recall that, for any $n\geq1$, the curve $\partial\Omega_n$ is piecewise linear and that the supplementary angle between two adjacent segments $L_i^{n+1}$, $L_{i+1}^{n+1}$ which compose $\partial\Omega_n$ is at most $2^{-n-4}\pi$. From this fact, and the construction above, using also Lemma \ref{lemflat}, one can check that $\Omega$ is a vanishing Reifenberg-flat domain. Further, from \rf{eqgjnalf} and the construction above, it follows easily that
\begin{equation}\label{eqinc99}
\partial\Omega\subset\UU_{c\ell_n/M_n}(\partial\Omega_n),
\end{equation}
for some fixed $c>0$.

\vvv


Recall also that $T_1,\ldots,T_{t_{n+1}}$ are the segments which compose the piecewise curve $G_{n+1}$.
Denote by $\cV_{n+1}$ the subfamily of segments $T\in\{T_1,\ldots,T_{t_{n+1}}\}$ such that $N(T)=(1,0)$,
and let
\begin{equation}\label{eqF}
 F =  \bigcup_{n\geq1} \,\bigcup_{T\in \cV_{n+1}}  	\tfrac12T.
 \end{equation}

\begin{lemma}\label{lemFdins}
The set $F$ is contained in $\partial\Omega$. 
\end{lemma}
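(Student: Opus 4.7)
The approach is to prove by induction on $m\ge n+1$ that, for every $T\in\cV_{n+1}$, the half-segment $\tfrac12 T$ is contained in $\partial\Omega_m$, and then to pass to the Hausdorff limit $\partial\Omega_m\to\partial\Omega$. To make the induction work I would carry along the additional information that $\tfrac12 T$ lies inside a maximal straight vertical sub-arc $S_m\subset\partial\Omega_m$ of outer normal $(1,0)$, and quantify the margin $\delta_m>0$ by which $\tfrac12 T$ sits strictly inside $S_m$ on each side.

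For the base case $m=n+1$, the segment $\wt T$ is centered on $T$, vertical, and has length $\HH^1(T)-a_{n+1}/200$, so $\tfrac12 T\subset\wt T$ with margin $\tfrac14\HH^1(T)-\tfrac{a_{n+1}}{400}\ge \tfrac{99\,a_{n+1}}{400}$ on each side, using $\HH^1(T)\ge a_{n+1}$. Any chord $L_i^{n+1}$ whose two endpoints lie on $\wt T$ is itself a subsegment of $\wt T$, and the regular spacing $\ell_{n+1}\le a_{n+1}/200$ is smaller than the margin; hence $\tfrac12 T\subset\partial\Omega_{n+1}$ with $\delta_{n+1}\ge 97\,a_{n+1}/400$.

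The heart of the induction is the geometric observation that if $L=L_j^{m-1}$ satisfies $N(L)=(1,0)$, then $\alpha_L=0$, the snowflake step degenerates into equal quartering, and the special rule in the construction of $\Gamma_j^{m-1}$ forces $\Gamma_j^{m-1}(L)=L$ set-theoretically (only with finer subdivision). Consequently $G_m$ coincides with $S_{m-1}$ on a neighborhood of $\tfrac12 T$; the interior junctions between collinear $T_i$'s in the smoothing $\wt G_m$ produce degenerate circular arcs that merely fill the gaps, and $\wt G_m$ differs from $S_{m-1}$ only by the removal of $a_m/400$ at each of the two ends of $S_{m-1}$ where it meets a non-collinear segment. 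The chord approximation at scale $\ell_m$ loses at most an additional $\ell_m$ at each end, so $\delta_m\ge \delta_{m-1}-(a_m/400+\ell_m)$. Since the construction forces $a_m\lesssim 4^{-M_{m-1}^2}\ell_{m-1}$ and $\ell_m\le a_m/200$, the tail $\sum_{k\ge n+2}(a_k/400+\ell_k)$ is super-geometrically small compared with $97\,a_{n+1}/400$, so $\delta_m>0$ for every $m\ge n+1$ and thus $\tfrac12 T\subset\partial\Omega_m$ for all such $m$.

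Finally, since $\partial\Omega_m\to\partial\Omega$ in Hausdorff distance by construction, any $x\in\tfrac12 T$ satisfies $\dist(x,\partial\Omega)\le d_H(\partial\Omega_m,\partial\Omega)\to 0$, and the closedness of $\partial\Omega$ gives $x\in\partial\Omega$. The only genuine obstacle I see is the verification that vertical segments with normal $(1,0)$ are preserved verbatim by every ingredient of the recursive construction --- particularly the degeneracy of the circular arcs $C_i$ between collinear $T_i$'s --- after which the margin bookkeeping and the passage to the Hausdorff limit are routine.
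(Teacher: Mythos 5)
Your proof is correct and takes essentially the same route as the paper's. The paper shows that $\frac{9}{10}T\subset\partial\Omega_{n+1}$ (via $\frac{99}{100}T\subset\wt G_{n+1}$ and $\ell_{n+1}\le a_{n+1}/200$), observes that at each subsequent stage at most two of the covering segments $L_i^{m}$ at the extreme ends of the vertical sub-arc can be lost, and sums $2\sum_{j\ge n+1}\ell_j$ to conclude that a segment $T^m\supset\frac12 T$ survives in $\partial\Omega_m$; letting $m\to\infty$ then gives $\frac12 T\subset\partial\Omega$. Your version rephrases exactly this as an explicit margin induction ($\delta_m\ge\delta_{m-1}-(a_m/400+\ell_m)$, with base $\delta_{n+1}\ge 97a_{n+1}/400$), which if anything is a cleaner piece of bookkeeping, and you correctly identify the two genuine points to verify: that vertical segments with normal $(1,0)$ are fixed by the Koch step (since $\alpha_L=0$ and the special rule applies) and that the circular arcs $C_i$ between collinear $T_i$'s degenerate to segments filling the gaps. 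The passage to the Hausdorff limit at the end is also exactly how the paper concludes.
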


Remark that we cannot ensure that the larger set
$$ F' =  \bigcup_{n\geq1} \,\bigcup_{T\in \cV_{n+1}}  	T$$
is contained in $\partial\Omega$ because in the smoothening process, i.e., the construction of $\wt G_{n+1}$ from $G_{n+1}$, some portion of the vertical segments $T_i$ is deleted in order to obtain $\wt T_i$. Also, for $m\geq n+1$, in the construction of $\wt G_{m+1}$ from $G_{m+1}$, more portions of the segments $T_i$
may be deleted.
 However, 
the deleted portion of the segments $T_i$ is pretty small for each $m\geq n$ (because of the choice of the parameter $a_{m+1}/200$ in \rf{eqan+21}), so that we can ensure that $F\subset \partial\Omega$. This what the lemma claims.

\begin{proof}
Let $T\in \cV_{n+1}$. Then $T\subset G_{n+1}$ and $\frac{99}{100}T\subset\wt G_{n+1}$.
Since we chose $\ell_{n+1}\leq \frac {a_{n+1}}{200}$, it follows easily that $\frac{9}{10}T\subset\partial\Omega_{n+1}$, and moreover $\frac{9}{10}T$ is covered by a subfamily
of segments from $L_i^{n+1}$ such that $N(L_i^{n+1})=(1,0)$. We denote this family by
$\{L_i^{n+1}\}_{i\in I^{n+1}(T)}$.
By inspection of the above construction, it can be seen that (assuming the sequence $\{m_n\}_n$ to growth fast enough if necessary) at most two segments from the family $\{L_i^{n+1}\}_{i\in I^{n+1}(T)}$ are not included in $\partial\Omega_{n+2}$.
Iterating, we see for each $m>n$, there is a segment $T^m$ concentric with $T$ contained in $\partial\Omega_m$ with length
$$\HH^1(T^m)\geq \frac9{10}\,\HH^1(T)- 2\sum_{j={n+1}}^m \ell_j\geq \frac12\,\HH^1(T).$$
Letting $m\to\infty$, this implies that $\frac12T\subset\partial\Omega$.
\end{proof}
\vv

\begin{lemma}\label{lembigpieces}
For any ball $B$ centered in $\partial\Omega$ such that $\diam(B)\leq\diam(\partial\Omega)$ there are two chord-arc
subdomains $\Omega_B^\pm\subset \Omega^\pm$ such that
\begin{equation}\label{eqH1F}
\HH^1(F\cap \partial\Omega_B^+\cap \partial\Omega_B^-) \geq c\,\diam(B),
\end{equation}
where $c>0$ is an absolute constant and the chord-arc character of $\Omega_B^\pm$ does not depend on $B$.
\end{lemma}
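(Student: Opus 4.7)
The plan is to build $\Omega_B^+$ and $\Omega_B^-$ so that a substantial subset of $F$ lies in their common boundary, by using vertical segments from $\cV(\Gamma_j^n)$ at a single appropriate scale $n$ together with ``comb''-like geometry.

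Given $B = B(x_0, r)$ with $x_0 \in \partial\Omega$ and $r \leq \diam(\partial\Omega)$, first choose $n$ so that $r/(4C_0) \leq \ell_n \leq r/4$ for a suitable absolute constant $C_0$ (possible since $\{\ell_n\}$ decreases to zero); by~\rf{eqinc99} and~\rf{eqgjnalf} there is a segment $L_j^n \subset \partial\Omega_n$ with both $L_j^n$ and $\Gamma_j^n$ contained in $B$. Lemma~\ref{lemvertical} yields a family $\cV(\Gamma_j^n)$ of vertical segments with outer unit normal $(1,0)$ of total length at least $c_1\,\HH^1(\Gamma_j^n) \geq c_1 \ell_n \gtrsim r$, and by Lemma~\ref{lemFdins} each $\frac12 T \subset F \subset \partial\Omega$. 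Moreover, a careful inspection of the construction shows that near each such $T$ there is a thin rectangle $R_T$ of half-width $\approx \HH^1(T)$ in which $\partial\Omega \cap R_T = \frac12 T$, with the left half of $R_T$ contained in $\Omega^+$ and the right half in $\Omega^-$; this uses the special rule preserving vertical sub-segments with $N=(1,0)$ and the fact that perturbations produced by neighboring non-vertical segments at all later generations have amplitudes much smaller than $\HH^1(T)$ (by choosing $M_{n+k}$ large enough).

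Next, I would define $\Omega_B^+$ and $\Omega_B^-$ as combs with backbones and teeth. Let $D^+$ be a rectangle in $\Omega_n \subset \Omega^+$, parallel to $L_j^n$ on its $-N(L_j^n)$ side, of dimensions $\approx \ell_n \times (\ell_n/M_n)$, and let $D^- \subset \Omega^-$ be parallel to $L_j^n$ on the $+N(L_j^n)$ side at perpendicular distance $\gtrsim \ell_n/M_n$ from $L_j^n$, placed beyond every bump of $\Gamma_j^n$ and of every subsequent iteration. For each $T \in \cV(\Gamma_j^n)$, attach to $D^+$ a thin rectangular ``tooth'' $U_T^+ \subset \Omega^+$ running through the bump region $V_{n+1} \setminus \Omega_n$ to the left half of $R_T$, and attach to $D^-$ a tooth $U_T^- \subset \Omega^-$ from $D^-$ through the $\Omega^-$-side safe region down to the right half of $R_T$. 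Setting $\Omega_B^\pm = D^\pm \cup \bigcup_{T \in \cV(\Gamma_j^n)} U_T^\pm$, one has $\partial\Omega_B^+ \cap \partial\Omega_B^- \supset \bigcup_T \frac12 T \subset F$, with $\HH^1\!\big(\bigcup_T \frac12 T\big) \geq \frac{c_1}{2} \ell_n \gtrsim r$, giving~\rf{eqH1F}.

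The main obstacle is verifying that $\Omega_B^\pm$ are chord-arc domains with constants independent of $n$, $j$, and $B$. Since $\partial\Omega_B^\pm$ is a finite union of straight segments and smooth arcs (with no fractal parts from $\partial\Omega$ except the $\frac12 T$'s, which are themselves straight), Ahlfors $1$-regularity reduces to bounding the number of segments at each scale; this can be enforced by taking tooth widths a small fixed fraction of $\HH^1(T)$ and, if necessary, first passing to a sparse subfamily $\cV' \subset \cV(\Gamma_j^n)$ with $\sum_{T \in \cV'} \HH^1(T) \gtrsim \ell_n$ whose elements are separated by a fixed multiple of $\HH^1(T)$. The most delicate step is routing each $U_T^+$ as a thin rectangle (or a short concatenation of thin rectangles joined by small arcs) through the Koch-type region $V_{n+1} \setminus \Omega_n$, avoiding $\Gamma_j^n$ except at its entry into $R_T$; this is achievable using the chord-arc character of $\Gamma_j^n$ already established, which supplies a uniform ``thickness'' of $V_{n+1} \setminus \Omega_n$ along any chosen access path. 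Once the teeth are in place, the corkscrew and Harnack chain conditions for $\Omega_B^\pm$ follow from their explicit rectangular structure, completing the verification that $\Omega_B^\pm$ are chord-arc subdomains with uniform constants.
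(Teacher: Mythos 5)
Your construction takes a genuinely different route from the paper's, and it has gaps that are more than cosmetic.

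The paper does \emph{not} build the chord-arc subdomains by hand. It picks the scale $n$ with $10\ell_n\geq r(B)>10\ell_{n+1}$, takes the arc $H$ of $\partial\Omega_{n+1}\cap B$ through the center of $B$, and forms the Ahlfors regular curve $\Gamma_H$ by replacing each segment $L_i^{n+1}\subset H$ with $\Gamma_i^{n+1}$. Lemmas~\ref{lemvertical} and~\ref{lemFdins} then give $\HH^1(\Gamma_H\cap F\cap\partial\Omega)\gtrsim r(B)$, and the existence of the chord-arc subdomains $\Omega_B^\pm$ is extracted \emph{in a single stroke} from Azzam's Theorem~\ref{teo-azz}, applied with $Z=\Gamma_H$. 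The entire geometric difficulty of producing corkscrew/Harnack-chain domains whose joint boundary captures a large piece of $F$ is outsourced to that theorem. Your proposal is in effect an attempt to reprove a special case of Azzam's result from scratch via a comb construction.

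The gaps in the comb construction are real. First, the backbone $D^+$ you describe is a rectangle of dimensions $\approx\ell_n\times(\ell_n/M_n)$; since $M_n\to\infty$, its aspect ratio is unbounded, and a rectangle that thin already fails the corkscrew condition with uniform constant (take $\xi$ at the midpoint of the long side and $r\approx\ell_n$: the largest ball in $B(\xi,r)\cap D^+$ has radius $\approx\ell_n/M_n\ll r$). The teeth cannot automatically repair this; one would need to prove carefully that the union $D^\pm\cup\bigcup U_T^\pm$ satisfies corkscrew, Harnack chain, and Ahlfors regularity with constants independent of $n$, and you do not do this. Second, the assertion that near each $T\in\cV(\Gamma_j^n)$ there is a rectangle $R_T$ of half-width $\approx\HH^1(T)$ with $\partial\Omega\cap R_T=\tfrac12 T$, with the two halves cleanly in $\Omega^+$ and $\Omega^-$, is not established in the paper and is not obvious: the neighbors of $T$ at generation $n$ have length comparable to $\HH^1(T)$ and continue to sprout snowflake ripples at all later generations, so controlling $\partial\Omega$ in a neighborhood of $T$ of \emph{that} size needs a quantitative argument that you only gesture at. Third, the routing of the teeth $U_T^+$ through $V_{n+1}\setminus\Omega_n$ ``avoiding $\Gamma_j^n$ except at entry into $R_T$'' is exactly the hard combinatorial/geometric content of a big-pieces-of-chord-arc result; invoking ``the chord-arc character of $\Gamma_j^n$'' does not by itself give you disjoint corridors to each $T$ with uniformly bounded complexity. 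Finally, the phrase ``$D^+\subset\Omega_n\subset\Omega^+$'' is not literally correct: $\Gamma_j^n$ oscillates on \emph{both} sides of $L_j^n$, so $\Omega_n\not\subset\Omega^+$; you need $D^+$ to sit at distance $\gtrsim\ell_n/M_n$ from $\partial\Omega_n$ inside $\Omega_n$ and then use~\rf{eqinc99}.

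In short, the idea of locating a definite mass of $F$ at a single scale in $B$ is shared with the paper, but the paper's key step---delegating the subdomain construction to Theorem~\ref{teo-azz} by exhibiting a uniformly rectifiable curve $\Gamma_H$ that carries $F\cap B$---is what makes the lemma go through cleanly, and your substitute for it is incomplete.
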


To prove this lemma we will use following result of Jonas Azzam \cite[Theorem 6.4]{Azzam}:

\begin{theorem}\label{teo-azz}
Let $\Omega^+\subset\R^{n+1}$ be a two-sided NTA domain and let $\Omega^-=(\overline \Omega)^c$. Let $E\subset \partial\Omega \cap Z$, where 
  $Z$ is a uniformly $n$-rectifiable set. Then there are two chord-arc domain $\Omega^\pm_E$ such that $\Omega_E^\pm
  \subset \Omega^\pm$, with $\diam(\partial\Omega_E^\pm)\gtrsim\diam(\partial\Omega^+)$, such that $E\subset\partial\Omega^+\cap\partial\Omega_E^+\cap\partial\Omega_E^-$.
  The chord-arc parameters of $\Omega^\pm_E$ depend only on the NTA parameters of $\Omega^\pm$ and on the uniform
  rectifiability parameters of $Z$.
\end{theorem}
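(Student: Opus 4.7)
The plan is to choose a scale $n$ adapted to $B$, use the snowflake structure at this scale to produce a substantial set of vertical segments inside $B$, and then invoke Azzam's Theorem~\ref{teo-azz} with the chord-arc curve $\Gamma_j^n$ playing the role of the uniformly rectifiable witness $Z$.

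More precisely, I will pick $n=n(B)$ so that $\ell_n$ is comparable to $\diam(B)$, specifically $\ell_n\leq c'\diam(B)<\ell_{n-1}$ for a small absolute constant $c'$. Since $B$ is centered on $\partial\Omega$ and $\partial\Omega\subset\UU_{c\ell_n/M_n}(\partial\Omega_n)$ by \rf{eqinc99}, some segment $L_j^n$ of $\partial\Omega_n$ lies well inside $B$; by \rf{eqgjnalf} the corresponding curve $\Gamma_j^n$ then also lies in $B$ (taking $c'$ and $1/M_n$ small enough). Applying Lemma~\ref{lemvertical} to $\Gamma_j^n$ yields a subfamily $\cV(\Gamma_j^n)$ of vertical segments with outer normal $(1,0)$ and total length at least $c_1\,\HH^1(\Gamma_j^n)\gtrsim \ell_n\gtrsim \diam(B)$. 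Set
$$E_0:=\bigcup_{T\in\cV(\Gamma_j^n)}\tfrac12 T.$$
Since $\cV(\Gamma_j^n)\subset\cV_{n+1}$, the definition \rf{eqF} and Lemma~\ref{lemFdins} give $E_0\subset F\subset\partial\Omega$; by construction also $E_0\subset\Gamma_j^n$ with $\HH^1(E_0)\geq \tfrac{c_1}{2}\HH^1(\Gamma_j^n)\gtrsim \diam(B)$.

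Now I invoke Theorem~\ref{teo-azz} with $E=E_0$ and $Z=\Gamma_j^n$. Vanishing Reifenberg flatness makes $\Omega^+$ two-sided NTA with absolute parameters, and $\Gamma_j^n$ is chord-arc with absolute constants (the Ahlfors regularity \rf{eqcorarc10}--\rf{eqcorarc11} together with the quasi-arc property), hence uniformly $1$-rectifiable with absolute parameters. Azzam's theorem therefore produces chord-arc subdomains $\Omega_B^\pm\subset\Omega^\pm$, with chord-arc character depending only on these absolute constants and hence independent of $B$, satisfying $E_0\subset\partial\Omega^+\cap\partial\Omega_B^+\cap\partial\Omega_B^-$. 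Since $E_0\subset F$, we conclude
$$\HH^1(F\cap\partial\Omega_B^+\cap\partial\Omega_B^-)\geq \HH^1(E_0)\geq c\,\diam(B),$$
which is \rf{eqH1F}.

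The main technical concern I anticipate is the scale invariance of Theorem~\ref{teo-azz} as quoted: it produces subdomains of diameter $\gtrsim \diam(\partial\Omega^+)$, whereas we are choosing $E_0$ at the scale of $B$. Fortunately, Lemma~\ref{lembigpieces} imposes no size constraint on $\Omega_B^\pm$ beyond the $\HH^1$-estimate, and the chord-arc parameters extracted from Azzam's proof depend only on the absolute NTA parameters of $\Omega^\pm$ and the absolute uniform rectifiability parameters of $Z=\Gamma_j^n$. Thus the chord-arc subdomains produced are uniformly good across $B$, which is precisely what the statement requires.
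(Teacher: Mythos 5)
There is a fundamental mismatch here: the statement you were asked to prove is Theorem \ref{teo-azz}, which is Azzam's result \cite[Theorem 6.4]{Azzam}. The paper does not prove it; it imports it as an external black box. Your proposal does not prove it either --- it \emph{invokes} Theorem \ref{teo-azz} in order to derive Lemma \ref{lembigpieces}, which is a different statement. Relative to the assigned task this is circular: you assume the very theorem whose proof is requested. An actual proof of Theorem \ref{teo-azz} would have to reproduce Azzam's construction of interior and exterior chord-arc subdomains containing a prescribed subset $E$ of $\partial\Omega^+\cap Z$ in their boundaries (built from the corkscrew structure of the two-sided NTA domain and the uniform rectifiability of $Z$), and nothing of that construction appears in your write-up.

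Even if one reads your proposal as a blind proof of Lemma \ref{lembigpieces}, it differs from the paper's argument in a way that opens a quantitative gap. You select a single curve $\Gamma_j^n$ with $\ell_n\leq c'\diam(B)<\ell_{n-1}$ and claim $\HH^1(E_0)\gtrsim\ell_n\gtrsim\diam(B)$. But consecutive scales in this construction are \emph{not} comparable: passing from $\partial\Omega_n$ to $\partial\Omega_{n+1}$ involves $M_n^2$ snowflake generations followed by a further subdivision, so $\ell_{n}/\ell_{n-1}$ can be as small as roughly $4^{-M_{n-1}^2}$, and hence $\ell_n$ may be far smaller than $\diam(B)$; a single $\Gamma_j^n$ then carries far too little length. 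The paper avoids this by taking the whole arc $H$ of $\partial\Omega_{n+1}\cap B$ through the center of $B$ (with $10\ell_n\geq r(B)>10\ell_{n+1}$), replacing each of the many segments $L_i^{n+1}\subset H$ by the corresponding $\Gamma_i^{n+1}$ to form an Ahlfors regular curve $\Gamma_H$ with $\sum_{i\in J_H}\HH^1(L_i^{n+1})\approx r(B)$ as in \rf{eqsumgi*}, and only then applying Theorem \ref{teo-azz} with $Z=\Gamma_H$ and $E=F\cap B$. Your remark about the diameter normalization in Azzam's theorem is a fair observation, but it does not repair either of these issues.
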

\vv

\begin{proof}[Proof of Lemma \ref{lembigpieces}]
Given a ball $B$ as in the lemma, let $n$ be such that $10\ell_n\geq r(B)>10\ell_{n+1}$. 
As shown in Lemma \ref{lembilip}, each curve $\Gamma_j^n$ is $C$-Ahlfors regular, for some fixed absolute constant $C$. Since, for $n$ fixed,  only a finite number of the curves $\Gamma_j^n$ intersect $2B$, it follows that $G_n\cap 2B$ is also Ahlfors regular (perhaps with a slightly worse constant $C'$). By construction, this implies that also $\wt G_n\cap 1.5B$ and $\partial \Omega_{n+1}\cap B$ are upper $C''$-Ahlfors regular.

Let $H$ be the arc of $B\cap\partial\Omega_{n+1}$ that passes through the center of $B$. Then $H$ is a $C''$-Ahlfors regular curve with $\diam(H)\approx \HH^1(H)\geq 2\,r(B)$. Denote by $\{L_i^{n+1}\}_{i\in J_H}$ the family of segments $L_i^{n+1}$, $1\leq i\leq m_{n+1}$, such that
$L_i^{n+1}\subset H$. By the choice of $n$, it follows easily that
\begin{equation}\label{eqsumgi*}
\sum_{i\in J_H} \HH^1(L_i^{n+1}) \approx r(B).
\end{equation}
Next, form the curve $\Gamma_H$ by replacing each $L_i^{n+1}$, $i\in J_H$, in the arc $H$ by the corresponding curve 
$\Gamma_i^{n+1}$. Since $\HH^1(\Gamma_i^{n+1})\approx \HH^1(L_i^{n+1})$ and $\Gamma_i^{n+1}$ is Ahlfors regular for each $i$, we infer that $\Gamma_H$ is a $C'''$-Ahlfors regular curve, where $C'''$ is another absolute constant.

By Lemmas \ref{lemvertical} and \rf{lemFdins}, we deduce that
$$\HH^1(\Gamma_i^{n+1}\cap F \cap \partial \Omega) \gtrsim \HH^1(L_i^{n+1})\quad \mbox{ for all $i\in J_H$.}$$
Summing on $i\in J_H$ and using \rf{eqsumgi*}, we deduce that
$$\HH^1(\Gamma_H\cap F \cap \partial \Omega) \gtrsim r(B).$$
Finally we apply Theorem \ref{teo-azz} with $E=F\cap B$ and $Z=\Gamma_H$ to deduce the existence of the required chord-arc domains
$\Omega_B^\pm\subset\Omega^\pm$ satisfying \rf{eqH1F}.
\end{proof}
\vvv


\section{The set $F$ has full harmonic measure and \rf{eqnb*} does not hold}

\begin{lemma}\label{lemfull}
The set $F$ defined in \rf{eqF} has full harmonic measure in $\Omega^+$.
\end{lemma}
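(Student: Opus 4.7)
The plan is to establish the scale-invariant lower bound
$$\omega^+(F\cap B)\gtrsim \omega^+(B)$$
for every ball $B$ centered on $\partial\Omega^+$ with $r(B)\leq \diam(\partial\Omega^+)$, and then conclude by a standard density argument. Indeed, $\omega^+$ is doubling because $\Omega^+$ is NTA, so Lebesgue differentiation for doubling measures forces the lower density of $F$ at every point of $\partial\Omega^+$ to be at least a fixed positive constant; if $\omega^+(\partial\Omega^+\setminus F)$ were positive, the density theorem would produce a point of $\partial\Omega^+\setminus F$ at which $F$ has density zero, a contradiction.

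To prove the displayed inequality, I would fix such a ball $B$ and apply Lemma~\ref{lembigpieces} to obtain chord-arc subdomains $\Omega_B^\pm\subset\Omega^\pm$, with chord-arc constants independent of $B$, and a set $E\subset F\cap B\cap\partial\Omega_B^+\cap\partial\Omega_B^-$ with $\HH^1(E)\gtrsim r(B)$. Since $\Omega_B^+$ is NTA and $E\subset B\cap\partial\Omega_B^+$, there is an interior corkscrew $y_B\in\Omega_B^+$ lying in a fixed multiple of $B$ with $\dist(y_B,\partial\Omega_B^+)\gtrsim r(B)$. The chord-arc property of $\Omega_B^+$ gives $\omega_{\Omega_B^+}^{y_B}\in A_\infty(\HH^1|_{\partial\Omega_B^+})$, and together with the $1$-Ahlfors regularity of $\partial\Omega_B^+$ this yields
$$\omega_{\Omega_B^+}^{y_B}(E)\gtrsim 1.$$

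The bridge to the ambient harmonic measure is the maximum principle. By Lemma~\ref{lemFdins}, $E\subset\partial\Omega^+\cap\partial\Omega_B^+$, so the function $x\mapsto \omega^{+,x}(E)-\omega_{\Omega_B^+}^{x}(E)$ is harmonic and nonnegative on $\Omega_B^+$; evaluating at $y_B$ gives $\omega^{+,y_B}(E)\geq \omega_{\Omega_B^+}^{y_B}(E)\gtrsim 1$. Since $\Omega_B^+\subset\Omega^+$ and $y_B\in\Omega_B^+$, we have $\dist(y_B,\partial\Omega^+)\geq \dist(y_B,\partial\Omega_B^+)\gtrsim r(B)$, so $y_B$ is an admissible corkscrew for $B$ in $\Omega^+$. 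Applying the change of pole formula (Theorem~\ref{teounif}) in $\Omega^+$,
$$\frac{\omega^+(F\cap B)}{\omega^+(B)}\geq \frac{\omega^+(E)}{\omega^+(B)}\approx \frac{\omega^{+,y_B}(E)}{\omega^{+,y_B}(B)}\gtrsim 1,$$
using $\omega^{+,y_B}(B)\leq 1$ at the end.

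The main technical point is locating a single interior point $y_B$ that simultaneously plays the role of a corkscrew for the chord-arc subdomain $\Omega_B^+$ (so the $A_\infty$ bound applies quantitatively) and of a comparable pole for $B$ in the ambient NTA domain $\Omega^+$ (so change of pole is available). The inclusion $\Omega_B^+\subset \Omega^+$ together with the fact that the relevant piece of common boundary $E$ lies in $\partial\Omega^+\cap\partial\Omega_B^+$ makes this essentially automatic, as the obstruction for $y_B$ inside $\Omega_B^+$ is at least as stringent as inside $\Omega^+$.
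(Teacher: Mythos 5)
Your proposal is correct and follows essentially the same route as the paper: reduce to the scale-invariant bound $\omega^+(F\cap B)\gtrsim\omega^+(B)$, produce the chord-arc subdomains from Lemma~\ref{lembigpieces}, use the $A_\infty$ property of harmonic measure in $\Omega_B^+$ together with \rf{eqH1F} to get $\omega_{\Omega_B^+}^{p_B}(F\cap B)\gtrsim1$, transfer to $\Omega^+$ via the maximum principle, and finish with the change-of-pole formula of Theorem~\ref{teounif}. The only differences are cosmetic (you unfold the ``standard density argument'' and the maximum-principle comparison in more detail).
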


Notice that the outer unit normal of $\Omega^+$ at every $\xi\in F$ equals $(1,0)$. From the fact that $F$ has full harmonic measure, it follows then that the outer unit normal of $\Omega^+$ is constant $\omega^+$-a.e.
in $\partial\Omega^+$.

\begin{proof}[Proof of Lemma \ref{lemfull}]
By standard arguments, it suffices to prove that there is a constant $c>0$ such that
for any ball $B$ centered in $\partial\Omega^+$ such that $\diam(B)\leq\diam(\partial\Omega^+)$, it holds
$$\omega^+(F\cap B) \geq c\,\omega^+(B).$$
Here we assume that the pole for harmonic measure is a point $p\in\Omega$ such that $\dist(p,\partial\Omega)\gtrsim
\diam(\Omega^+)$.

Let $\Omega_B^\pm$ be as in Lemma \ref{lembigpieces} and let $p_B\in B\cap \Omega_B^+$ be such that
$$\dist(p_B, \partial\Omega_B^+)\gtrsim \diam(B),$$
so that $p_B$ is also a corkscrew point for $B$ with respect to $\Omega^+$.
By the change of pole formula in Theorem \ref{teounif}, it suffices to show that
$$\omega^{+,p_B}(F\cap B) \gtrsim 1.$$
By \rf{eqH1F}, we know that
$$\HH^1(F\cap \partial\Omega_B^+) \geq c\,\diam(B) \approx \HH^1(\partial\Omega_B^+ \cap B).$$
Since the harmonic measure $\omega_{\Omega_B^+}^{p_B}$ is an $A_\infty$ weight with respect to $\HH^1|_{\partial
\Omega_B^+}$, we deduce that
$$\omega_{\Omega_B^+}^{p_B}(F\cap B) \gtrsim 1.$$
Then, by the maximum principle it holds
$$\omega^{+,p_B}(F\cap B) \geq\omega_{\Omega_B^+}^{p_B}(F\cap B) \gtrsim 1,$$
as wished.
\end{proof}

\vv
To complete the proof of Theorem \ref{teoex} it only remains to show that the condition \rf{eqnb*} does not hold.
To this end, it suffices to check that there are arbitrarily small balls $B$ centered in $\partial\Omega^+$
such that 
\begin{equation}\label{eqNB01}
|N_B-(1,0)|\geq c_2,
\end{equation}
for some fixed $c_2>0$.

For simplicity, suppose that one of the initial segments $L_j^0$ from $\Omega_0$ is horizontal and satisfies
$N(L_j^0) =(0,1)$. By construction, the segment with coding $S_{1,\ldots,1}$ (with $M_0^2$ 1's) is also horizontal and its associated outer normal is $(0,1)$. So one of the segments $T_i$, $1\leq i\leq t_1$, from the curve $G_1$ is also horizontal and $N(T_i)=(0,1)$. By the smallness of the parameter $a_1$ defined in \rf{eqan+1} and the fact that $\ell_1\leq a_1/100$, it follows easily that there exists at least one segment $L_k^1$ from $\partial\Omega_1$ contained in $T_i$, so that $N(L_k^1)=(0,1)$. Iterating, we deduce that for any $n\geq1$, there is a horizontal segment $L_{k_n}^n$ from $\partial\Omega_n$ such that $N(L_{k_n}^n)=(0,1)$.

Let $B_n$ be a ball centered in the mid-point of $L_{k_n}^n$ with radius $r_n:=\HH^1(L_{k_n}^n)/4$.
From \rf{eqinc99}, we deduce that 
$\partial\Omega\cap \frac12 B_n\neq \varnothing$ and $\partial\Omega\cap B_n\subset \UU_{r_n/100}(L_{k_n}^n)$ for $n$ big enough.
Now let $\wt B_n\subset B_n$ be a ball centered in $\partial\Omega$ such that $\diam(\wt B_n) \geq \frac9{10}\diam(B_n)$.
Then the vector $N_{\wt B_n}$ is close to being vertical and thus $|N(\wt B_n)- (1,0)|\gtrsim1$.
Hence there are arbitrary small balls satisfying \rf{eqNB01}.



\vv

\end{document}